\newtheorem{theorem}{Theorem}[section]
\newtheorem{lemma}[theorem]{Lemma}
\newtheorem{corollary}[theorem]{Corollary}
\theoremstyle{definition}
\newtheorem{example}[theorem]{Example}
\newcommand{\email}[1]{\texttt{#1}}
\newcommand{\set}[2]{\left\{ #1 \mid #2 \right\}}
\newcommand{\dd}{\cdots}
\newcommand{\Z}{\mathbb Z}
\newcommand{\Q}{\mathbb Q}
\newcommand{\No}{\mathbb N_0}
\newcommand{\Ni}{\mathbb N_1}
\newcommand{\eps}{\varepsilon}
\newcommand{\lcm}{\mathrm{lcm}}
\newcommand{\px}[1]{ P_{#1} }
\newcommand{\rx}[1]{ R_{#1} }
\newcommand{\qx}[3]{ Q_{#1,#2,#3} }
\newcommand{\sx}[2]{ S_{#1,#2} }
\newcommand{\len}[1]{\mathrm{len}_{#1}}
\newcommand{\lhp}{\mathcal M}
\newcommand{\sol}[2]{\mathrm{Sol}_{#1}^{#2}}
\begin{document}

\title{Systems of Word Equations, Polynomials and Linear Algebra: A New Approach
\thanks{Supported by the Academy of Finland under grant 257857}
}

\author{Aleksi Saarela \\
Department of Mathematics and Statistics \\
University of Turku, FI-20014 Turku, Finland \\
\email{amsaar@utu.fi}
}

\maketitle

\begin{abstract}
We develop a new tool,
namely polynomial and linear algebraic methods,
for studying systems of word equations.
We illustrate its usefulness
by giving essentially simpler proofs of several hard problems.
At the same time we prove extensions of these results.
Finally, we obtain the first nontrivial upper bounds
for the fundamental problem of
the maximal size of independent systems.
These bounds depend quadratically on the size of the shortest equation.
No methods of having such bounds have been known before.
\end{abstract}

\section{Introduction}

Combinatorics on words is a part of discrete mathematics.
It studies the properties of strings of symbols and
has applications in many areas from pure mathematics to computer science.
See, e.g., \cite{Lo83} or \cite{ChKa97}
for a general reference on this subject.

Some of the most fundamental questions in combinatorics on words
concern word equations.
First such question is the complexity of the satisfiability problem,
i.e., the problem of determining
whether a given equation with constants has a solution.
The satisfiability problem was proved to be decidable by Makanin \cite{Ma77} and
proved to be in PSPACE by Plandowski \cite{Pl04},
and it has been conjectured to be NP-complete.

A second question is
how to represent all solutions of a constant-free equation.
Hmelevskii proved that the solutions of an equation on three unknowns
can be represented with parametric words,
but this does not hold for four unknowns \cite{Hm71}.
The original proof has been simplified \cite{KaSa08dlt}
and used to study a special case of the satisfiability problem \cite{Sa09dlt}.

A third fundamental question, which is very important for this article,
is the maximal size of an independent system of word equations.
It was proved by Albert and Lawrence \cite{AlLa85ehrenfeucht}
and independently by Guba \cite{Gu86} that
an independent system cannot be infinite.
However, it is still not known
whether there are unboundedly large independent systems.

One of the basic results in the theory of word equations
is that a nontrivial equation causes a defect effect. In other
words, if $n$ words satisfy a nontrivial relation, then they can be
represented as products of $n-1$ words. Not much is known about the
additional restrictions caused by several independent relations
\cite{HaKa04}.

In fact, even the following simple question,
formulated already in \cite{CuKa83},
is still unanswered:
How large can an independent system of word equations on three unknowns be?
The largest known examples consist of three equations.
This question can be obviously asked also in the case of $n > 3$ unknowns.
Then there are independent systems of size $\Theta(n^4)$ \cite{KaPl94}.
Some results concerning independent systems on three unknowns
can be found in \cite{HaNo03}, \cite{CzKa07} and \cite{CzPl09},
but the open problem seems to be
very difficult to approach with current techniques.

There are many variations of the above question: We may study it in
the free semigroup, i.e., require that $h(x) \ne \eps$ for every
solution $h$ and unknown $x$, or examine only the systems having a
solution of rank $n-1$, or study chains of solution sets instead of
independent systems. See, e.g., \cite{HaKaPl02}, \cite{HaKa04},
\cite{Cz08} and \cite{KaSa11adian}.

In this article we will use polynomials to study some
questions related to systems of word equations. Algebraic techniques
have been used before, most notably in the proof of Ehrenfeucht's
conjecture, which is based on Hilbert's basis theorem. However, the
way in which we use polynomials is quite different and allows us to
apply linear algebra to the problems.

The main contribution of this article
is the development of new methods for attacking problems on word equations.
This is done in Sections \ref{sect:fixedlength} and \ref{sect:solsets}.
Other contributions include
simplified proofs and generalizations for old results
in Sections \ref{sect:appl} and \ref{sect:indsyst},
and studying maximal sizes of independent systems of equations
in Section \ref{sect:indsyst}.
Thus the connection between word equations and linear algebra
is not only theoretically interesting,
but is also shown to be very useful at establishing simple-looking results
that have been previously unknown,
or that have had only very complicated proofs.
In addition to the results of the paper,
we believe that the techniques may be useful
in further analysis of word equations.

Next we give a brief overview of the paper. First, in Section
\ref{sect:basic} we define a way to transform words into polynomials
and prove some basic results using these polynomials.

In Section \ref{sect:fixedlength} we prove that if the lengths of the
unknowns are fixed, then there is a connection between the ranks of
solutions of a system of equations and the rank of a certain
polynomial matrix. This theorem is very important for all the later
results.

Section \ref{sect:appl} contains small generalizations of two
earlier results. These are nice examples of the methods developed in
Section \ref{sect:fixedlength} and have independent interest, but they
are not important for the later sections.

In Section \ref{sect:solsets}
we analyze the results of Section \ref{sect:fixedlength}
when the lengths of the unknowns are not fixed.
For every solution these lengths form an $n$-dimensional vector,
called the \emph{length type} of the solution.
We prove that the length types
of all solutions of rank $n-1$ of a pair of equations
are covered by a finite union of $(n-1)$-dimensional subspaces
if the equations are not equivalent on solutions of rank $n-1$.
This means that the solution sets of pairs of equations
are in some sense more structured
than the solution sets of single equations.
This theorem is the key to proving the remaining results.

We begin Section \ref{sect:indsyst} by proving a theorem about
unbalanced equations. This gives a considerably simpler reproof and
a generalization of a result in \cite{HaNo03}.
Finally, we return to the question about sizes of independent systems.
There is a trivial bound for the size of a system
depending on the length of the longest equation,
because there are only exponentially many equations of a fixed length.
We prove that if the system is independent
even when considering only solutions of rank $n-1$,
then there is an upper bound for the size of the system
depending quadratically on the length of the shortest equation.
Even though it does not give a fixed bound even in the case of three unknowns,
it is a first result of its type --
hence opening, we hope, a new avenue for future research.

\section{Basic Theorems} \label{sect:basic}

Let $|w|$ be the length of a word $w$ and
$|w|_a$ be the number of occurrences of a letter $a$ in $w$.
The set of nonnegative integers is denoted by $\No$ and
the set of positive integers by $\Ni$.
The empty word is denoted by $\eps$.

In this section we give proofs for some well-known results.
These serve as examples of the polynomial methods used.
Even though the standard proofs of these are simple,
we hope that the proofs given here illustrate
how properties of words can be formulated and proved in terms of polynomials.

Let $\Sigma \subset \Ni$ be an alphabet of numbers.
For a word
\begin{math}
    w = a_{0} \dots a_{n-1} \in \Sigma^n
\end{math}
we define a polynomial
\begin{equation*}
    \px{w} = a_{0} + a_1 X^{1} + \dots + a_{n-1} X^{n-1}
\end{equation*}
and, if $n = |w| > 0$, a rational function
\begin{equation*}
    \rx{w} = \frac{\px{w}}{X^n - 1} .
\end{equation*}
The mapping $w \mapsto \px{w}$ is an injection from words to polynomials.
Here the assumption $0 \notin \Sigma$ is needed;
if injectivity of $\px{w}$ would not be needed,
then also 0 could be a letter.
If $w_1, \dots, w_m \in \Sigma^*$, then
\begin{equation} \label{eq:prodp}
    \px{w_1 \dots w_m}
    = \px{w_1} + \px{w_2} X^{|w_1|}
        + \dots + \px{w_m} X^{|w_1 \dots w_{m-1}|} ,
\end{equation}
and if $w_1, \dots, w_m \in \Sigma^+$, then
\begin{align*}
    \px{w_1 \dots w_m}
    =& \rx{w_1} (X^{|w_1|} - 1)
        + \rx{w_2} (X^{|w_1 w_2|} - X^{|w_1|}) \\
        &+ \dots + \rx{w_m}
            (X^{|w_1 \dots w_{m}|} - X^{|w_1 \dots w_{m-1}|}) .
\end{align*}
If $w \in \Sigma^+$ and $k \in \No$, then
\begin{equation*}
    \px{w^k} = \px{w} \frac{X^{k|w|} - 1}{X^{|w|} - 1}
    = \rx{w} (X^{k|w|} - 1) .
\end{equation*}

The polynomial $\px{w}$ can be viewed as
a characteristic polynomial of the word $w$.
A polynomial or formal power series obtained from a sequence in this way
is sometimes known as the generating function or $z$-transform of the sequence.
We could also replace $X$ with a suitable number $b$ and
get a number whose reverse $b$-ary representation is $w$.
Or we could let the coefficients of $\px{w}$
be from some other commutative ring than $\Z$.
Similar ideas have been used to analyze words in many places,
see, e.g., \cite{Ku97} and \cite{Sa85}.

\begin{example}
If $w = 1212$, then
\begin{math}
    \px{w} = 1 + 2X + X^2 + 2X^3
\end{math}
and
\begin{equation*}
    \rx{w} = \frac{1 + 2 X + X^2 + 2 X^3}{X^4 - 1}
    = \frac{(1 + X^2)(1 + 2 X)}{(X^2 + 1)(X^2 - 1)}
    = \frac{1 + 2 X^2}{X^2 - 1}.
\end{equation*}
\end{example}

A word $w \in \Sigma^+$ is \emph{primitive}
if it is not of the form $u^k$ for any $k > 1$.
If $w = u^k$ and $u$ is primitive,
then $u$ is a \emph{primitive root} of $w$.

\begin{lemma} \label{lem:primdiv}
If $w$ is primitive,
then $\px{w}$ is not divisible by any polynomial of the form
\begin{math}
    (X^{|w|} - 1) / (X^{n} - 1),
\end{math}
where $n < |w|$ is a divisor of $|w|$.
\end{lemma}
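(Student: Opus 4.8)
The plan is to prove the contrapositive: assuming that $\px{w}$ is divisible by $(X^{m}-1)/(X^{n}-1)$ for some divisor $n$ of $m := |w|$ with $n < m$, I would show that $w$ fails to be primitive. Write $m = nk$ with $k > 1$. The first thing to record is the explicit shape of the divisor, namely
\begin{equation*}
    \frac{X^{m}-1}{X^{n}-1} = 1 + X^{n} + X^{2n} + \dots + X^{(k-1)n},
\end{equation*}
which is monic of degree $m-n$; in particular divisibility may be taken over $\Z[X]$ and over $\Q[X]$ interchangeably.

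Next I would pin down degrees. Since $0 \notin \Sigma$, every coefficient of $\px{w}$ is a positive integer, so $\deg \px{w} = m-1$ exactly. Writing $\px{w} = Q \cdot (1 + X^{n} + \dots + X^{(k-1)n})$, the cofactor $Q$ is therefore a genuine polynomial of degree $(m-1)-(m-n) = n-1$. The crucial structural observation is that multiplying such a $Q$, of degree at most $n-1$, by $1 + X^{n} + \dots + X^{(k-1)n}$ simply lays down $k$ shifted copies of $Q$, the $j$-th copy occupying the degree range $[jn,\, jn+n-1]$ for $j = 0, \dots, k-1$. These ranges are pairwise disjoint, so no cancellation or carry between blocks can occur.

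Reading off coefficients, this disjointness forces the coefficient sequence of $\px{w}$ to be periodic with period $n$: if $Q = b_0 + b_1 X + \dots + b_{n-1} X^{n-1}$, then the coefficient of $X^{jn+i}$ in $\px{w}$ equals $b_i$ for every $0 \le i \le n-1$ and $0 \le j \le k-1$. Hence the letters of $w$ satisfy $a_{jn+i} = b_i = a_i$, which is exactly the statement that $w = u^{k}$ with $u = a_0 \dots a_{n-1}$ (a legitimate word over $\Sigma$, since its letters are letters of $w$). As $k > 1$, this contradicts the primitivity of $w$, completing the argument.

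I expect the only genuinely delicate point to be the degree bookkeeping that makes the shifted copies of $Q$ disjoint; everything downstream of that — periodicity of the coefficients, and thus periodicity of $w$ with a period properly dividing its length — is immediate. I would double-check the edge behaviour (that $\deg Q = n-1$ rather than merely $\le n-1$, using positivity of the top coefficient of $\px{w}$) only to reassure myself that $Q$ contributes a full block of length $n$, though for the periodicity conclusion the bound $\deg Q \le n-1$ is all that is strictly needed.
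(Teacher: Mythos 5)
Your proof is correct and follows essentially the same route as the paper: assume divisibility, write $\px{w}$ as a cofactor of degree at most $n-1$ times $1 + X^{n} + \dots + X^{|w|-n}$, observe that the product consists of disjoint shifted copies of the cofactor, and conclude $w = u^{|w|/n}$, contradicting primitivity. Your version merely spells out the degree bookkeeping and the $\Z[X]$-versus-$\Q[X]$ point that the paper leaves implicit.
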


\begin{proof}
If $\px{w}$ is divisible by $(X^{|w|} - 1) / (X^n - 1)$,
then there are numbers $a_0, \dots, a_{n-1}$ such that
\begin{equation*}
\begin{split}
    \px{w}
    &= (a_{0} + a_1 X^{1} + \dots + a_{n-1} X^{n-1})
        \frac{X^{|w|} - 1}{X^n - 1} \\
    &= (a_{0} + a_1 X^{1} + \dots + a_{n-1} X^{n-1})
        (1 + X^n + \dots + X^{|w|-n}) ,
\end{split}
\end{equation*}
so $w = (a_{0} \dots a_{n-1})^{|w|/n}$.
\end{proof}

The next two theorems are among
the most basic and well-known results in combinatorics on words
(except for item \eqref{item:r} of Theorem \ref{thm:commutation},
which, however, appeared in \cite{HoKo09} in a slightly different form).

\begin{theorem}
Every nonempty word has a unique primitive root.
\end{theorem}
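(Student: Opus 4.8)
The plan is to prove existence and uniqueness separately, reserving the polynomial machinery for the uniqueness part where it does the real work. For existence I would argue combinatorially: the set of words $u$ with $w \in u^+$ is nonempty (it contains $w$ itself), so I pick such a $u$ of minimal length. This $u$ must be primitive, for if $u = t^j$ with $j > 1$ then $w \in t^+$ with $|t| < |u|$, contradicting minimality.

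For uniqueness the starting observation is that $\rx{w}$ is invariant under taking powers: if $w = u^k$, then $\px{w} = \rx{u}(X^{k|u|} - 1) = \rx{u}(X^{|w|} - 1)$, and dividing by $X^{|w|} - 1$ gives $\rx{w} = \rx{u}$. Hence if $u$ and $v$ are both primitive roots of $w$, then $\rx{u} = \rx{w} = \rx{v}$. Writing $p = |u|$ and $q = |v|$ and clearing denominators, this becomes the polynomial identity
\begin{equation*}
    \px{u}(X^{q} - 1) = \px{v}(X^{p} - 1) .
\end{equation*}

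The heart of the argument is to extract from this identity a divisibility that Lemma~\ref{lem:primdiv} can refute. I would set $d = \gcd(p,q)$ and use $\gcd(X^{p} - 1, X^{q} - 1) = X^{d} - 1$, writing $X^{p} - 1 = (X^{d} - 1)A$ and $X^{q} - 1 = (X^{d} - 1)B$ with $\gcd(A,B) = 1$. The identity then simplifies to $\px{u}\, B = \px{v}\, A$, so $A \mid \px{u}$. Since $A = (X^{p} - 1)/(X^{d} - 1)$ and $d \mid p$, Lemma~\ref{lem:primdiv} applied to the primitive word $u$ forbids $d < p$; thus $d = p$, i.e. $p \mid q$. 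With $d = p$ we have $A = 1$, so $\px{v} = \px{u}\,(X^{q} - 1)/(X^{p} - 1) = \px{u^{q/p}}$, and injectivity of $w \mapsto \px{w}$ forces $v = u^{q/p}$. Finally, primitivity of $v$ gives $q/p = 1$, whence $u = v$.

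I expect the divisibility step to be the main obstacle, in the following sense: everything reduces to recognizing that $\rx{u} = \rx{v}$ encodes a relation that, once factored through the common part $X^{d} - 1$ of the two denominators, leaves a cofactor $A$ dividing $\px{u}$ of exactly the shape ruled out by Lemma~\ref{lem:primdiv}. The gcd factorization of $X^{p} - 1$ and $X^{q} - 1$ and the coprimality of the cofactors $A, B$ are what make this clean; the existence half is routine and needs no polynomials.
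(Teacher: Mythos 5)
Your proof is correct and takes essentially the same route as the paper: both reduce uniqueness to the identity $\px{u}(X^{|v|}-1) = \px{v}(X^{|u|}-1)$, factor through $\gcd(X^{|u|}-1, X^{|v|}-1) = X^{\gcd(|u|,|v|)}-1$, and invoke Lemma~\ref{lem:primdiv} to rule out a proper power. The only cosmetic differences are that you spell out the (routine) existence half, and you apply the lemma asymmetrically to $u$ alone and finish via injectivity of $w \mapsto \px{w}$ and primitivity of $v$, whereas the paper applies the lemma symmetrically to both $u$ and $v$ to conclude $|u| = \gcd(|u|,|v|) = |v|$.
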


\begin{proof}
Let $u^m = v^n$, where $u$ and $v$ are primitive.
We need to show that $u = v$.
We have
\begin{equation*}
    \px{u} \frac{X^{m|u|} - 1}{X^{|u|} - 1} = \px{u^m}
    = \px{v^n} = \px{v} \frac{X^{n|v|} - 1}{X^{|v|} - 1} .
\end{equation*}
Because $m|u| = n|v|$, we get
\begin{math}
    \px{u} (X^{|v|} - 1) = \px{v} (X^{|u|} - 1) .
\end{math}
If $d = \gcd(|u|, |v|)$, then $\gcd(X^{|u|} - 1, X^{|v|} - 1) = X^d - 1$.
Thus $\px{u}$ must be divisible by $(X^{|u|} - 1) / (X^d - 1)$ and
$\px{v}$ must be divisible by $(X^{|v|} - 1) / (X^d - 1)$.
By Lemma \ref{lem:primdiv},
both $u$ and $v$ can be primitive only if $|u| = d = |v|$.
\end{proof}

The primitive root of a word $w \in \Sigma^+$ is denoted by $\rho(w)$.

\begin{theorem} \label{thm:commutation}
For $u, v \in \Sigma^+$, the following are equivalent:
\begin{enumerate}
\item $\rho(u) = \rho(v)$, \label{item:rho}
\item if $U, V \in \{u, v\}^*$ and $|U| = |V|$, then $U = V$, \label{item:all}
\item $u$ and $v$ satisfy a nontrivial relation, \label{item:exists}
\item $\rx{u} = \rx{v}$. \label{item:r}
\end{enumerate}
\end{theorem}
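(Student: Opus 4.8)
The plan is to prove the cyclic chain of implications \ref{item:rho} $\Rightarrow$ \ref{item:all} $\Rightarrow$ \ref{item:exists} $\Rightarrow$ \ref{item:r} $\Rightarrow$ \ref{item:rho}. The first two steps are elementary and the last reuses the machinery already set up for the primitive root theorem, so the real content lies in \ref{item:exists} $\Rightarrow$ \ref{item:r}. For \ref{item:rho} $\Rightarrow$ \ref{item:all}, if $\rho(u) = \rho(v) = p$ then every $U \in \{u,v\}^*$ is a power $p^c$, so $|U|$ determines $c$ and hence $U$; thus $|U| = |V|$ forces $U = V$. For \ref{item:all} $\Rightarrow$ \ref{item:exists}, I apply \ref{item:all} to the two words $uv$ and $vu$, which have equal length, obtaining $uv = vu$, i.e.\ the nontrivial relation $xy = yx$ satisfied by $u$ and $v$.

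The heart of the argument is \ref{item:exists} $\Rightarrow$ \ref{item:r}. Suppose there are distinct words $\alpha, \beta$ over symbols $x, y$ with $\alpha(u,v) = \beta(u,v) =: W$, where $\alpha(u,v)$ denotes the substitution $x \mapsto u$, $y \mapsto v$. Writing the factorization prescribed by $\alpha$ as $W = w_1 \cdots w_k$ with $w_i \in \{u,v\}$, applying the product formula for $\px{W}$ in terms of the $\rx{w_i}$, and grouping the summands according to whether $w_i = u$ or $w_i = v$ (so that $\rx{w_i}$ factors out), I obtain
\begin{equation*}
    \px{W} = \rx{u}\, A_\alpha + \rx{v}\, B_\alpha, \qquad A_\alpha + B_\alpha = X^{|W|} - 1 ,
\end{equation*}
where $A_\alpha = \sum_{w_i = u}(X^{\ell_i} - X^{\ell_{i-1}})$ with $\ell_i = |w_1 \cdots w_i|$, and $B_\alpha$ collects the $v$-blocks. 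The same representation holds for $\beta$; since $\alpha(u,v) = \beta(u,v)$ the left-hand sides and the lengths agree, and substituting $B = X^{|W|} - 1 - A$ yields $(\rx{u} - \rx{v})(A_\alpha - A_\beta) = 0$. As $A_\alpha - A_\beta$ is a polynomial, it suffices to show $A_\alpha \ne A_\beta$, i.e.\ that $A_\alpha$ determines $\alpha$.

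This determinacy is the step I expect to be the main obstacle. The key observation is that $A_\alpha = (X-1)\sum_{j} c_j X^j$, where $c_j \in \{0,1\}$ records whether position $j$ of $W$ lies inside a block equal to $u$; hence $A_\alpha = A_\beta$ is equivalent to the two factorizations covering exactly the same positions by $u$-blocks. But the set of $u$-covered positions already determines the whole factorization: each of its maximal runs is a concatenation of $u$-blocks, so its length is divisible by $|u|$ and it must be cut every $|u|$ symbols, while each complementary run is a concatenation of $v$-blocks and is cut every $|v|$ symbols. Thus the cut points and the labels are forced, so $A_\alpha = A_\beta$ implies $\alpha = \beta$; the nontriviality $\alpha \ne \beta$ therefore gives $\rx{u} = \rx{v}$.

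Finally, for \ref{item:r} $\Rightarrow$ \ref{item:rho}, I first note that $\rx{w} = \rx{\rho(w)}$: writing $w = \rho(w)^k$, the identity $\px{\rho(w)^k} = \rx{\rho(w)}(X^{k|\rho(w)|} - 1) = \rx{\rho(w)}(X^{|w|} - 1)$ gives $\rx{w} = \px{w}/(X^{|w|}-1) = \rx{\rho(w)}$. Hence $\rx{u} = \rx{v}$ reduces to $\rx{p} = \rx{q}$ with $p = \rho(u)$ and $q = \rho(v)$ primitive, and cross-multiplying yields $\px{p}(X^{|q|} - 1) = \px{q}(X^{|p|} - 1)$. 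This is precisely the identity handled in the proof of the primitive root theorem: using $\gcd(X^{|p|} - 1, X^{|q|} - 1) = X^{\gcd(|p|,|q|)} - 1$ together with Lemma \ref{lem:primdiv} forces $|p| = |q|$ and then $\px{p} = \px{q}$, whence $p = q$ and $\rho(u) = \rho(v)$.
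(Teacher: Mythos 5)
Your proof is correct, and it follows the paper's plan: the same cycle (\ref{item:rho}) $\Rightarrow$ (\ref{item:all}) $\Rightarrow$ (\ref{item:exists}) $\Rightarrow$ (\ref{item:r}) $\Rightarrow$ (\ref{item:rho}), with the same telescoping decomposition of $\px{W}$ into an $\rx{u}$-part and an $\rx{v}$-part at the key step. Two differences are worth noting. First, in (\ref{item:exists}) $\Rightarrow$ (\ref{item:r}) the paper writes the difference of the two factorizations as $\rx{u}p - \rx{v}p$ and simply \emph{asserts} that $p \ne 0$ when the relation is nontrivial; your covered-positions argument --- $A_\alpha = (X-1)\sum_j c_j X^j$ with $c_j \in \{0,1\}$, and the covered set forces both the cut points (every $|u|$ symbols inside a covered run, every $|v|$ symbols inside a complementary run) and the labels --- is precisely the missing justification of that assertion, so your write-up is more complete than the paper's at exactly the point you identified as the main obstacle. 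Second, for (\ref{item:r}) $\Rightarrow$ (\ref{item:rho}) the paper has a shorter route: from $\rx{u} = \rx{v}$ it gets
\begin{equation*}
    \px{u^{|v|}} = \rx{u}\left(X^{|u||v|} - 1\right) = \rx{v}\left(X^{|u||v|} - 1\right) = \px{v^{|u|}},
\end{equation*}
hence $u^{|v|} = v^{|u|}$ by injectivity of $w \mapsto \px{w}$, and uniqueness of primitive roots finishes. Your reduction to the primitive roots $p = \rho(u)$, $q = \rho(v)$ followed by re-running the gcd and Lemma \ref{lem:primdiv} argument is valid, but it duplicates work that the unique-primitive-root theorem has already done; citing that theorem after deriving $u^{|v|} = v^{|u|}$ would be more economical.
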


\begin{proof}
(\ref{item:rho}) $\Rightarrow$ (\ref{item:all}):
\begin{math}
    U = \rho(u)^{|U| / |\rho(u)|}
    = \rho(u)^{|V| / |\rho(u)|} = V .
\end{math}

(\ref{item:all}) $\Rightarrow$ (\ref{item:exists}):
Clear.

(\ref{item:exists}) $\Rightarrow$ (\ref{item:r}):
Let
\begin{math}
    u_1 \dots u_m = v_1 \dots v_n,
\end{math}
where $u_i, v_j \in \{u,v\}$.
Then
\begin{equation*}
\begin{split}
    0 =& \px{u_1 \dots u_m} - \px{v_1 \dots v_n} \\
    =& \rx{u_1} (X^{|u_1|} - 1)
        + \dots + \rx{u_m}
            (X^{|u_1 \dots u_{m}|} - X^{|u_1 \dots u_{m-1}|}) \\
        &- \rx{v_1} (X^{|v_1|} - 1)
        - \dots - \rx{v_n}
            (X^{|v_1 \dots v_{n}|} - X^{|v_1 \dots v_{n-1}|}) \\
    =& \rx{u} p - \rx{v} p
\end{split}
\end{equation*}
for some polynomial $p$.
If $m \ne n$ or $u_i \ne v_i$ for some $i$,
then $p \ne 0$, and thus $\rx{u} = \rx{v}$.

(\ref{item:r}) $\Rightarrow$ (\ref{item:rho}):
We have
\begin{math}
    \px{u^{|v|}}
    = \rx{u} (X^{|u||v|} - 1)
    = \rx{v} (X^{|u||v|} - 1)
    = \px{v^{|u|}} ,
\end{math}
so $u^{|v|} = v^{|u|}$ and
\begin{math}
    \rho(u) = \rho(u^{|v|}) = \rho(v^{|u|}) = \rho(v) .
\end{math}
\end{proof}

Similarly, polynomials can be used to give a simple proof
for the theorem of Fine and Wilf.
In fact, one of the original proofs in \cite{FiWi65} uses power series.
The proof given here is essentially this original proof
formulated in terms of our polynomials.
Algebraic techniques have also been used
to prove variations of this theorem \cite{MiShWa01}.

\begin{theorem}[Fine and Wilf] \label{thm:finewilf}
If $u^i$ and $v^j$ have a common prefix of length
\begin{math}
    |u| + |v| - \gcd(|u|, |v|),
\end{math}
then $\rho(u) = \rho(v)$.
\end{theorem}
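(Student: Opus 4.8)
The plan is to reduce everything to the equivalence \eqref{item:rho} $\Leftrightarrow$ \eqref{item:r} of Theorem \ref{thm:commutation}: it suffices to prove that the common prefix condition forces $\rx{u} = \rx{v}$. Following the power-series proof of Fine and Wilf, the idea is to read $\rx{w}$ as the generating function of the infinite periodic word $w^\infty$. Expanding $1/(X^{|w|}-1)$ as a formal power series in $\Z[[X]]$ gives $\rx{w} = -\px{w}(1 + X^{|w|} + X^{2|w|} + \cdots)$, so the coefficient of $X^k$ in $\rx{w}$ is, up to a global sign, the $k$-th letter of $w^\infty$. Hence two such words share a prefix of length $\ell$ precisely when $\rx{u}$ and $\rx{v}$ agree in all coefficients of $X^0, \dots, X^{\ell-1}$, that is, when $\rx{u} - \rx{v}$ has power-series valuation at least $\ell$.

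First I would check that the hypothesis really produces such a prefix for the infinite words. Writing $m = |u|$, $n = |v|$ and $d = \gcd(m,n)$, the words $u^i$ and $v^j$ have a common prefix of length $m+n-d$; since this forces $im \ge m+n-d > 0$ and likewise for $v^j$, we have $i,j \ge 1$, and the prefix is then a prefix of both $u^\infty$ and $v^\infty$. By the previous paragraph this means $\rx{u} - \rx{v}$ has valuation at least $m+n-d$.

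The heart of the argument is then a degree count. I would put $\rx{u} - \rx{v}$ over the common denominator $\lcm(X^m - 1, X^n - 1) = (X^m-1)(X^n-1)/(X^d-1)$, which has degree $m + n - d$ because $\gcd(X^m-1, X^n-1) = X^d - 1$. The resulting numerator is $N = \px{u}\,\frac{X^n-1}{X^d-1} - \px{v}\,\frac{X^m-1}{X^d-1}$, and since $\deg \px{u} \le m-1$ and $\deg \px{v} \le n-1$, each term has degree at most $m+n-d-1$, so $\deg N \le m+n-d-1$. On the other hand the denominator has a nonzero constant term, so the valuation of $N$ equals that of $\rx{u} - \rx{v}$ and is therefore at least $m+n-d$. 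A nonzero polynomial cannot have valuation strictly larger than its degree, so $N = 0$, giving $\rx{u} = \rx{v}$ and, by Theorem \ref{thm:commutation}, $\rho(u) = \rho(v)$.

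The step I expect to require the most care is the bookkeeping that makes the two bounds collide exactly: the prefix length $m+n-d$ is precisely one more than the degree of the numerator, and it is the sharpness of the Fine--Wilf bound that makes this work. I would also want to state cleanly the passage between agreement of words on a prefix and agreement of power series up to a given degree, and to confirm the identity $\gcd(X^m-1, X^n-1) = X^d-1$ that fixes the degree of the least common multiple.
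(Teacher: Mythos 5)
Your proof is correct and takes essentially the same route as the paper: both rest on writing $\rx{u}-\rx{v}$ over the common denominator $(X^{|u|}-1)(X^{|v|}-1)/(X^d-1)$, where $d=\gcd(|u|,|v|)$, and playing the degree bound $|u|+|v|-d-1$ of the numerator $\frac{X^{|v|}-1}{X^d-1}\px{u}-\frac{X^{|u|}-1}{X^d-1}\px{v}$ against divisibility by a high power of $X$ forced by the common prefix. The differences are only presentational: you argue directly in $\Z[[X]]$ (valuation at least $|u|+|v|-d$ forces the numerator to vanish, after which Theorem \ref{thm:commutation} gives $\rho(u)=\rho(v)$), while the paper argues contrapositively with the finite powers $u^{m/|u|}$ and $v^{m/|v|}$, $m=\lcm(|u|,|v|)$, without needing Theorem \ref{thm:commutation}.
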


\begin{proof}
Let $\gcd(|u|, |v|) = d$, $\lcm(|u|, |v|) = m$,
$m / |u| = r$ and $m / |v| = s$.
If $\rho(u) \ne \rho(v)$, then $u^r \ne v^s$,
so $u^r$ and $v^s$ have a maximal common prefix of length $k < m$.
This means that
\begin{equation*}
\begin{split}
    \px{u^r} - \px{v^s}
    &= \frac{X^{r|u|} - 1}{X^{|u|} - 1} \px{u}
        - \frac{X^{s|v|} - 1}{X^{|v|} - 1} \px{v} \\
    &= \frac{(X^{m} - 1)(X^d - 1)}{(X^{|u|} - 1)(X^{|v|} - 1)}
        \left( \frac{X^{|v|} - 1}{X^d - 1} \px{u}
            - \frac{X^{|u|} - 1}{X^d - 1} \px{v} \right)
\end{split}
\end{equation*}
is divisible by $X^k$, but not by $X^{k+1}$, so also the polynomial
\begin{equation*} \label{eq:rdiff1}
    \frac{X^{|v|} - 1}{X^d - 1} \px{u}
    - \frac{X^{|u|} - 1}{X^d - 1} \px{v}
\end{equation*}
is divisible by $X^k$, but not by $X^{k+1}$.
Thus $k$ can be at most the degree of this polynomial,
which is at most $|u| + |v| - d - 1$.
\end{proof}

\section{Solutions of Fixed Length} \label{sect:fixedlength}

In this section we apply polynomial techniques to word equations.
From now on, we will assume that there are $n$ unknowns,
they are ordered as $x_1, \dots, x_n$
and $\Xi$ is the set of these unknowns.

A (coefficient-free) \emph{word equation} $u = v$ on $n$ unknowns
consists of two words $u, v \in \Xi^*$.
A \emph{solution} of this equation
is any morphism $h: \Xi^* \to \Sigma^*$ such that $h(u) = h(v)$.
The equation is \emph{trivial} if $u$ and $v$ are the same word.

The (combinatorial) \emph{rank} of a morphism $h$ is the smallest
number $r$ for which there is a set $A$ of $r$ words such that $h(x)
\in A^*$ for every unknown $x$. A morphism of rank at most one is
\emph{periodic}.

Let $h: \Xi^* \to \Sigma^*$ be a morphism.
The \emph{length type} of $h$ is the vector
\begin{equation*}
    L = (|h(x_1)|, \dots, |h(x_n)|) \in \No^n.
\end{equation*}
This length type $L$ determines a morphism
\begin{equation*}
    \len{L}: \Xi^* \to \No, \len{L}(w) = |h(w)|.
\end{equation*}
It is important that $\len{L}$ depends only on $L$ and not on $h$.

If $E$ is a word equation,
the set of its solutions is denoted by $\sol{}{}(E)$,
the set of solutions of rank $r$ by $\sol{r}{}(E)$,
the set of solutions of length type $L$ by $\sol{}{L}(E)$ and
the set of solutions of rank $r$ and length type $L$ by $\sol{r}{L}(E)$.
These can be naturally generalized for systems of equations.
For example, if $E_1$ and $E_2$ are word equations,
then $\sol{}{}(E_1, E_2) = \sol{}{}(E_1) \cap \sol{}{}(E_2)$.

For a word equation
\begin{math}
    E: y_1 \dots y_k = z_1 \dots z_l
\end{math}
(where $y_i, z_i \in \Xi$), a variable $x \in \Xi$ and a length type $L$, let
\begin{equation*}
    \qx{E}{x}{L} = \sum_{y_i = x} X^{\len{L}(y_1 \dots y_{i-1})}
        - \sum_{z_i = x} X^{\len{L}(z_1 \dots z_{i-1})} .
\end{equation*}
Informally, this polynomial encodes the positions of $x$ in the equation $E$.

\begin{theorem} \label{thm:weqpeq}
A morphism $h: \Xi^* \to \Sigma^*$ of length type $L$
is a solution of an equation $E: u = v$ if and only if
\begin{equation*}
    \sum_{x \in \Xi} \qx{E}{x}{L} \px{h(x)} = 0.
\end{equation*}
\end{theorem}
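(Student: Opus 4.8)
The plan is to exploit the product formula \eqref{eq:prodp}, which translates word concatenation into polynomial addition with appropriate powers of $X$ as shifts. The key observation is that for a morphism $h$ of length type $L$, the shift exponents appearing when we expand $\px{h(u)}$ are governed entirely by $\len{L}$, since $|h(y_1 \dots y_{i-1})| = \len{L}(y_1 \dots y_{i-1})$ depends only on $L$ and not on the particular values $h(x)$.

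First I would apply \eqref{eq:prodp} to both sides of the equation $E: u = v$ with $u = y_1 \dots y_k$ and $v = z_1 \dots z_l$. This gives
\begin{equation*}
    \px{h(u)} = \sum_{i=1}^{k} \px{h(y_i)} X^{\len{L}(y_1 \dots y_{i-1})}
    \eand
    \px{h(v)} = \sum_{i=1}^{l} \px{h(z_i)} X^{\len{L}(z_1 \dots z_{i-1})} .
\end{equation*}
Next I would regroup each sum by collecting, for each unknown $x \in \Xi$, all the terms in which $y_i = x$ (respectively $z_i = x$). Since $h$ is a morphism, $\px{h(y_i)} = \px{h(x)}$ whenever $y_i = x$, so $\px{h(x)}$ factors out of each group, leaving exactly the sum of powers $\sum_{y_i = x} X^{\len{L}(y_1 \dots y_{i-1})}$ multiplying it. Subtracting the two expansions and recognizing the resulting coefficient of $\px{h(x)}$ as precisely $\qx{E}{x}{L}$, I obtain
\begin{equation*}
    \px{h(u)} - \px{h(v)} = \sum_{x \in \Xi} \qx{E}{x}{L} \px{h(x)} .
\end{equation*}

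The equivalence then follows from the injectivity of $w \mapsto \px{w}$ established in Section \ref{sect:basic}: $h$ is a solution means $h(u) = h(v)$, which is equivalent to $\px{h(u)} = \px{h(v)}$, i.e.\ to the vanishing of the displayed sum. I expect no serious obstacle here, as the argument is essentially a bookkeeping identity; the only point requiring care is the indexing when regrouping the shifted terms, and ensuring that the empty-word case (an unknown $x$ with $|h(x)| = 0$, giving $\px{h(x)} = 0$) is handled consistently — but since such terms simply drop out on both sides, the identity remains valid regardless of which unknowns map to $\eps$.
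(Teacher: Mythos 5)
Your proposal is correct and follows essentially the same route as the paper's proof: both expand $\px{h(u)}$ and $\px{h(v)}$ via \eqref{eq:prodp}, regroup the shifted terms by unknown so that $\qx{E}{x}{L}$ emerges as the coefficient of $\px{h(x)}$, and then invoke the injectivity of $w \mapsto \px{w}$ to pass between $h(u) = h(v)$ and $\px{h(u)} = \px{h(v)}$. Your write-up merely spells out the regrouping and the empty-word case, which the paper leaves implicit.
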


\begin{proof}
Now $h(u) = h(v)$ if and only if $\px{h(u)} = \px{h(v)}$,
and the polynomial $\px{h(u)} - \px{h(v)}$
can be written as $\sum_{x \in \Xi} \qx{E}{x}{L} \px{h(x)}$
by \eqref{eq:prodp}.
\end{proof}

Theorem \ref{thm:weqpeq} means that if we fix a length type $L$,
then we can turn a word equation into a linear equation
where the polynomials $\qx{E}{x}{L}$ are the coefficients.
A solution for this linear equation is an $n$-dimensional vector
over the field of rational functions,
and $h \in \sol{}{L}(E)$ corresponds to a solution
\begin{math}
    (\px{h(x_1)}, \dots \px{h(x_n)})
\end{math}
of the linear equation.

\begin{example}
Let $\Xi = \{x, y, z\}$, $E: xyz = zxy$ and $L = (1, 1, 2)$.
Then
\begin{equation*}
    \qx{E}{x}{L} = 1 - X^2, \qquad
    \qx{E}{y}{L} = X - X^3, \qquad
    \qx{E}{z}{L} = X^2 - 1.
\end{equation*}
If $h$ is the morphism defined by $h(x) = 1$, $h(y) = 2$ and $h(z) =
12$, then $h$ is a solution of $E$ and
\begin{equation*}
\begin{split}
    & \qx{E}{x}{L} \px{h(x)} + \qx{E}{y}{L} \px{h(y)}
        + \qx{E}{z}{L} \px{h(z)} \\
    =& (1 - X^2) \cdot 1 + (X - X^3) \cdot 2 + (X^2 - 1) (1 + 2X)
    = 0.
\end{split}
\end{equation*}
\end{example}

At this point we start using linear algebra.
We will do this over two fields:
The field of rational numbers
(for the first time in Lemma \ref{lem:rdim})
and the field of rational functions
(for the first time in Lemma \ref{lem:rdim2}).
We start with an example.

\begin{example}
Consider the morphism
$h: \{x_1, x_2, x_3\}^* \to \{1, 2\}^*$
of rank 2 defined by
\begin{math}
    h(x_1) = 1, h(x_2) = 2, h(x_3) = 12 .
\end{math}
If $h$ is a solution of an equation $E$,
then so is $g \circ h$ for every morphism $g: \{1, 2\}^* \to \{1, 2\}^*$.
The length type of $g \circ h$ is
\begin{equation*}
    (|g(1)|, |g(2)|, |g(12)|)
    = |g(1)| \cdot (1, 0, 1) + |g(2)| \cdot (0, 1, 1) .
\end{equation*}
Because the vectors $(1, 0, 1)$ and $(0, 1, 1)$ are linearly independent,
these length types essentially form a two-dimensional space
(of course $|g(1)|$ and $|g(2)|$ are nonnegative integers,
so the length types don't form the whole space).
This observation is formalized and generalized in Lemma \ref{lem:rdim}.
\end{example}

A morphism $\phi: \Xi^* \to \Xi^*$ is an \emph{elementary transformation}
if there are two unknowns $x, y \in \Xi$
so that $\phi(y) \in \{xy, x\}$ and
$\phi(z) = z$ for $z \in \Xi \smallsetminus \{y\}$.
If $\phi(y) = xy$, then $\phi$ is \emph{regular},
and if $\phi(y) = x$, then $\phi$ is \emph{singular}.
The next lemma follows immediately from results in \cite{Lo83}.

\begin{lemma} \label{lem:elemtrans}
Every solution $h$ of an equation $E$ has a factorization
\begin{math}
    h = \theta \circ \phi \circ \alpha,
\end{math}
where $\alpha(x) \in \{x, \eps\}$ for all $x \in \Xi$,
\begin{math}
    \phi = \phi_m \circ \dots \circ \phi_1,
\end{math}
every $\phi_i$ is an elementary transformation,
$\phi \circ \alpha$ is a solution of $E$ and
$\theta(x) \ne \eps$ for all $x \in \Xi$.
If $\alpha(x) = \eps$ for $s$ unknowns $x$ and
$t$ of the $\phi_i$ are singular,
then the rank of $\phi \circ \alpha$ is $n - s - t$.
\end{lemma}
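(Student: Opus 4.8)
The plan is to prove the statement constructively, by an explicit reduction procedure of Nielsen type and reading off the required data from it. First I would dispose of the erasing: set $\alpha(x) = \eps$ for every unknown $x$ with $h(x) = \eps$ and $\alpha(x) = x$ otherwise, so that $\alpha$ erases exactly $s$ unknowns. Since every elementary transformation I will use involves only unknowns on which $h$ is non-empty, each such transformation commutes with $\alpha$, and it therefore suffices to factor $h$ through the set $Y$ of unknowns with $h(x)\ne\eps$. The reduction runs by induction on $N(h)=\sum_{x\in Y}|h(x)|$. The base case is when the words $h(x)$, $x\in Y$, are pairwise distinct and none is a prefix of another; then they form a prefix code and I stop, taking the remaining $\phi$ to be the identity and $\theta=h$, extended to an arbitrary non-empty value on the erased and merged unknowns.

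If the configuration is not terminal, there are $x\ne y$ in $Y$ with $h(x)$ a prefix of $h(y)$. If $h(x)=h(y)$, I apply a singular transformation $\phi_1(y)=x$ and pass to the residual morphism that forgets $y$; if $h(x)$ is a proper prefix of $h(y)$, I apply a regular transformation $\phi_1(y)=xy$ and replace $h(y)$ by $h(x)^{-1}h(y)$. In either case the residual morphism has strictly smaller value of $N$ (the singular case drops $y$ from the sum, the regular case shortens $h(y)$), so induction applies and factors the residual as $\theta\circ\phi_m\circ\dots\circ\phi_2\circ\alpha$. A direct check shows that prepending $\phi_1$ as the innermost transformation recovers $h$, and because $\phi_1$ touches only non-erased unknowns it commutes with $\alpha$; this gives the required form $h=\theta\circ\phi\circ\alpha$ with $\phi=\phi_m\circ\dots\circ\phi_1$, and the number $t$ of singular steps is exactly the number of times the equal-image case occurred.

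Next I would verify that $g:=\phi\circ\alpha$ is itself a solution of $E$. At termination the images $\theta(x)$ of the surviving unknowns form a prefix code, so $\theta$ restricted to the submonoid generated by these unknowns is injective. Since every letter occurring in $g(u)$ and $g(v)$ is a surviving unknown, the equality $h(u)=h(v)$, that is $\theta(g(u))=\theta(g(v))$, forces $g(u)=g(v)$; hence $g$ satisfies $E$. The values of $\theta$ on the erased and merged unknowns are irrelevant here and may be chosen non-empty, so $\theta(x)\ne\eps$ for every $x\in\Xi$ as required.

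It remains to show that $g$ has rank $n-s-t$. The surviving unknowns are exactly those never erased by $\alpha$ and never used as the target of a singular transformation, so there are $n-s-t$ of them; taking these as a one-letter generating set shows at once that the rank of $g$ is at most $n-s-t$. The hard part will be the matching lower bound, which is the genuine defect-theoretic content of the lemma. I would establish it by tracking the invariant ``rank equals the number of surviving unknowns'' through the forward construction $\alpha,\ \phi_1\circ\alpha,\ \phi_2\circ\phi_1\circ\alpha,\dots$: a regular transformation $y\mapsto xy$ acts as an injective substitution, since its letter images form a code, and hence preserves the rank, while a singular transformation $y\mapsto x$ identifies two letters and so can lower the rank by at most one, matching the drop of one in the count of surviving unknowns. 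Verifying that a regular step is genuinely rank-preserving and that a single identification cannot collapse the rank by more than one is exactly where the combinatorial defect effect, as developed in \cite{Lo83}, enters, and this is the step I expect to require the most care; the rest of the argument is bookkeeping.
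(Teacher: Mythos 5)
Your reduction itself is sound, and it is essentially the standard Nielsen-type argument that the paper does not spell out but delegates to \cite{Lo83}: the descent on $\sum_{x}|h(x)|$, the identity $h=h'\circ\phi_1$ at each step, the commutation with $\alpha$, the prefix-code argument showing that $\phi\circ\alpha$ is a solution of $E$, and the upper bound (the rank of $\phi\circ\alpha$ is at most $n-s-t$) are all correct. The genuine gap is in the lower bound for the rank, exactly where you place the main weight of the proof. Both general principles you propose to track the invariant with are false. A regular elementary transformation, although it is an injective substitution, need not preserve combinatorial rank: the set $\{zywx,\ zywywx,\ zxzywx\}$ has rank $3$ (a short case analysis on common prefixes shows that no two-element set generates it), yet applying $y\mapsto xy$ gives $\{(zx)(ywx),\ (zx)(ywx)^2,\ (zx)^2(ywx)\}\subseteq\{zx,\,ywx\}^*$, which has rank $2$. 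So ``injective, hence rank-preserving'' is not a valid inference, even for these special substitutions. Likewise, a singular transformation can lower the rank by more than one: $\{xxz,\ xyz,\ yxz,\ yyz\}$ has rank $3$, but $y\mapsto x$ sends all four words to $xxz$, of rank $1$. Consequently the induction ``rank equals the number of surviving unknowns'', justified as you justify it, cannot be completed: the two steps you defer to \cite{Lo83} are not true at that level of generality, so no amount of care will verify them.

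The repair must use the special structure of the morphisms $g_k=\phi_k\circ\dots\circ\phi_1\circ\alpha$ that your own construction produces, and it is available cheaply. Prove instead the invariant that for every unknown $z$ that is still active after step $k$, the word $g_k(z)$ ends with the letter $z$. This holds for $g_0=\alpha$, and it is preserved: a regular step $y\mapsto xy$ prepends and never appends, so $\phi_{k+1}(z)$ still ends with $z$ for every active $z$, and a singular step is the identity on all letters that remain active. At termination every image of $g=\phi\circ\alpha$ lies in $Z^*$, where $Z$ is the set of the $n-s-t$ surviving unknowns, and for each $z\in Z$ the word $g(z)$ is nonempty and ends with $z$. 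Now if all images of $g$ lie in $A^*$, then for each $z\in Z$ the last factor in a factorization of $g(z)$ over $A$ is a word of $A$ ending with $z$, and distinct $z$ yield distinct such words; hence $A$ has at least $|Z|=n-s-t$ elements. Together with your upper bound this gives that the rank of $\phi\circ\alpha$ equals $n-s-t$, with no step-by-step rank tracking at all. (Note that both counterexamples above violate the last-letter property---all words end in the same letter---which is precisely why they can never occur in your construction.)
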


\begin{lemma} \label{lem:rdim}
Let $E$ be an equation on $n$ unknowns and
let $h \in \sol{r}{L}(E)$.
There is an $r$-dimensional subspace $V$ of $\Q^n$ containing $L$ such that
the set of those length types of morphisms in $\sol{r}{}(E)$ that are in $V$
is not covered by any finite union of $(r-1)$-dimensional spaces.
\end{lemma}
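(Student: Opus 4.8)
The plan is to reduce an arbitrary rank-$r$ solution to a ``free'' one by Lemma~\ref{lem:elemtrans}, then manufacture a full-dimensional family of rank-$r$ solutions by postcomposition, reading off $V$ as the span of the associated Parikh vectors. First I would apply Lemma~\ref{lem:elemtrans} to write $h = \theta \circ g$ with $g = \phi \circ \alpha$, where $g$ is itself a solution of $E$ of rank $r$ and $\theta$ is nonerasing. Let $\Delta \subseteq \Xi$ be the set of unknowns occurring in the words $g(x_1), \dots, g(x_n)$, and let $N$ be the $n \times |\Delta|$ integer matrix with entries $N_{id} = |g(x_i)|_d$. For any morphism $\theta'$ the length type of $\theta' \circ g$ is $N\ell$, where $\ell = (|\theta'(d)|)_{d \in \Delta}$; in particular the original length type is $L = N\ell_0$ with $\ell_0 \in \Ni^{|\Delta|}$ because $\theta$ is nonerasing. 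I would then take $V$ to be the column space of $N$ in $\Q^n$, which contains $L$ by construction.

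The key step, and the one I expect to be the main obstacle, is to show $\dim V = \mathrm{rank}(N) = r$, i.e.\ that the combinatorial rank of $g$ equals the rank of its abelianization. For this I would pass to abelianization matrices: writing $M(g) = M(\phi_m)\cdots M(\phi_1)M(\alpha)$, each regular elementary transformation contributes an invertible (elementary) matrix, each singular one a matrix of rank $n-1$ that can drop the rank by at most one, and $M(\alpha)$ has rank $n-s$. Hence $\mathrm{rank}(M(g)) \ge (n-s)-t = r$, while the reverse bound $\mathrm{rank}(M(g)) \le r$ is automatic from the fact that $g$ has combinatorial rank $r$, so that the Parikh vectors of the $g(x_i)$ lie in the span of the Parikh vectors of an $r$-element base. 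Since $N$ is, after discarding the zero rows of $M(g)$, the transpose of $M(g)$, this yields $\mathrm{rank}(N) = r$, so $V$ is genuinely $r$-dimensional.

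It remains to produce enough rank-$r$ solutions with length types in $V$. For each $\ell \in \Ni^{|\Delta|}$ with all coordinates large I would choose an injective (code) morphism $\theta'_\ell \colon \Delta^* \to \Sigma^*$ with $|\theta'_\ell(d)| = \ell_d$; a suitable prefix code exists once the prescribed lengths are large enough. Then $\theta'_\ell \circ g$ is a solution of $E$, since $g(u) = g(v)$ forces $\theta'_\ell(g(u)) = \theta'_\ell(g(v))$, and because $\theta'_\ell$ is injective it preserves combinatorial rank, so $\theta'_\ell \circ g \in \sol{r}{}(E)$ with length type $N\ell \in V$. Finally I would argue that the set $\{N\ell\}$ so obtained cannot be covered by finitely many $(r-1)$-dimensional subspaces $W_1, \dots, W_m$: each preimage $N^{-1}(W_i)$ is a proper subspace of $\Q^{|\Delta|}$, as $N$ surjects onto $V \not\subseteq W_i$, and a full-dimensional cone of integer points is not contained in any finite union of proper subspaces, so some admissible $\ell$ gives $N\ell \notin \bigcup_i W_i$.
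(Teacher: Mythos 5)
Your proposal follows the same route as the paper's proof: factor $h$ through a principal solution by Lemma~\ref{lem:elemtrans}, abelianize, and lower-bound the rank of the resulting occurrence matrix by computing the ranks of the matrices coming from $\alpha$ and from the regular and singular elementary transformations. Your matrix $N$ is the paper's matrix $A$ with its zero columns deleted, and your rank count is exactly the paper's. What you add is an explicit completion of the final step, which the paper leaves implicit: manufacturing enough solutions of rank exactly $r$ whose length types lie in $V$. That is where your argument has a genuine gap.

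The claim that ``because $\theta'_\ell$ is injective it preserves combinatorial rank'' is false. Injectivity does not prevent the rank from collapsing: the morphism defined by $d_1 \mapsto ab$, $d_2 \mapsto ba$, $d_3 \mapsto aabb$ is injective (its images form a prefix code), yet it sends the identity morphism on $\{d_1,d_2,d_3\}^*$, which has combinatorial rank $3$, to a morphism of combinatorial rank $2$, because all three images lie in $\{a,b\}^*$. Your construction constrains $\theta'_\ell$ only by injectivity and by the prescribed lengths, so nothing rules out such a collapse; hence the assertion $\theta'_\ell \circ g \in \sol{r}{}(E)$ is unjustified, and without it the covering argument produces length types of solutions of the wrong rank. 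The repair is easy and is in the spirit of the paper's proof of Theorem~\ref{thm:rank}: take $\theta'_\ell(d) = a_d^{\ell_d}$ with a \emph{distinct letter} $a_d$ for each $d \in \Delta$. Then $|\theta'_\ell(g(x_i))|_{a_d} = N_{id}\,\ell_d$, so the Parikh vectors of the images of $\theta'_\ell \circ g$ form the matrix $N \cdot \mathrm{diag}(\ell)$ of rank $\mathrm{rank}(N) = r$; since the combinatorial rank of a morphism is at least the dimension of the span of the Parikh vectors of its images and at most $\mathrm{rank}(g) = r$, the composition has rank exactly $r$, which is what you need.

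A smaller point: you cite Lemma~\ref{lem:elemtrans} as giving $\mathrm{rank}(g) = r$ and accordingly write $(n-s)-t = r$, but the lemma only gives $\mathrm{rank}(g) = n-s-t$, and from $h = \theta \circ g$ only the inequality $n-s-t \ge r$ follows. For example, for $x_1x_2 = x_2x_1$ on three unknowns and $h(x_1)=h(x_2)=h(x_3)=a$, the factorization $f(x_1)=f(x_2)=x_1$, $f(x_3)=x_3$, $\theta(x_1)=\theta(x_3)=a$ satisfies every requirement of the lemma with $n-s-t=2$, although $h$ has rank $1$. If $n-s-t>r$, your $V$ has dimension $n-s-t$ and your solutions have rank $n-s-t$, so the statement as given is not proved. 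To be fair, the paper's own proof is equally silent here (it establishes only $\mathrm{rank}(A)\ge r$), and in the paper's sole application of the lemma, with $r=n-1$ and $E$ nontrivial, equality is automatic, since $s=t=0$ would make $f$ an injective morphism solving a nontrivial equation; but a self-contained proof should either argue that the factorization can be chosen with $n-s-t=r$ or prove the correspondingly weakened statement.
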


\begin{proof}
For arbitrary morphisms
$F: \Xi^* \to \Xi^*$ and $G: \Xi^* \to \Sigma^*$,
let
\begin{math}
    L_{G} = (|G(x_1)|, \dots, |G(x_n)|)^T
\end{math}
be the length type of $G$ as a column vector and let
\begin{math}
    A_{F} = ( |F(x_i)|_{x_j} )
\end{math}
be an $n \times n$ matrix.
Then
\begin{math}
    L_{G \circ F} = A_{F} L_{G} .
\end{math}
More generally, if $F_1, \dots, F_m$ are morphisms $\Xi^* \to \Xi^*$,
then
\begin{equation*}
    L_{G \circ F_m \circ \dots \circ F_1}
    = A_{F_1} \dots A_{F_m} L_{G} .
\end{equation*}

Let
\begin{math}
    h = \theta \circ \phi_m \circ \dots \circ \phi_1 \circ \alpha
\end{math}
as in Lemma \ref{lem:elemtrans}.
Let $f = \phi_m \circ \dots \circ \phi_1 \circ \alpha$.
The rank of $f$ is $n - s - t \geq r$
if $s$ and $t$ are as in Lemma \ref{lem:elemtrans}.
The morphism $g \circ f$ is a solution of $E$
for every morphism $g: \Xi^* \to \Sigma^*$.
The length type of $g \circ f$ is
\begin{math}
    L_{g \circ f}
    = L_{g \circ \phi_m \circ \dots \circ \phi_1 \circ \alpha}
    = A L_g ,
\end{math}
where
\begin{math}
    A = A_\alpha A_{\phi_1} \dots A_{\phi_m} .
\end{math}
To prove the theorem, it needs to be shown that
the rank of $A$ is at least $r$.
This can be done by determining the ranks of the matrices
$A_{\alpha}$ and $A_{\phi_k}$.

The matrix $A_{\alpha}$ is a diagonal matrix
and the $i$th element on the diagonal is 0
if $\alpha(x_i) = \eps$ and 1 otherwise.
Thus the rank of $A_{\alpha}$ is $n - s$.

If $\phi$ is the elementary transformation defined by $\phi(x_1) = x_2 x_1$,
then
\begin{equation*}
    A_{\phi} =
    \begin{pmatrix}
        1 & 1 & 0 & \dots & 0 \\
        0 & 1 & 0 & \dots & 0 \\
        0 & 0 & 1 & \dots & 0 \\
        \dots \\
        0 & 0 & 0 & \dots & 1
    \end{pmatrix}
\end{equation*}
is a matrix of rank $n$
(this is an identity matrix
except for the second element on the first row).
In general, the rank of $A_{\phi}$ is $n$
for every regular elementary transformation $\phi$.

If $\phi$ is the elementary transformation defined by $\phi(x_1) = x_2$,
then
\begin{equation*}
    A_{\phi} =
    \begin{pmatrix}
        0 & 1 & 0 & \dots & 0 \\
        0 & 1 & 0 & \dots & 0 \\
        0 & 0 & 1 & \dots & 0 \\
        \dots \\
        0 & 0 & 0 & \dots & 1
    \end{pmatrix}
\end{equation*}
is a matrix of rank $n - 1$
(this is an identity matrix
except for the first two elements on the first row).
In general, the rank of $A_{\phi}$ is $n - 1$
for every singular elementary transformation $\phi$.

The rank of $A_{\alpha}$ is $n - s$,
$t$ of the matrices $A_{\phi_k}$ have rank $n - 1$ and
the rest have rank $n$.
Thus the rank of $A$ is at least $n - s - t$, which is at least $r$.
\end{proof}

\begin{lemma} \label{lem:rdim2}
Let $E$ be an equation on $n$ unknowns and let $h \in \sol{r}{L}(E)$.
There are morphisms $f: \Xi^* \to \Xi^*$ and $\theta: \Xi^* \to \Sigma^*$ and
polynomials $p_{ij}$ such that the following conditions hold:
\begin{enumerate}
\item $h = \theta \circ f$,
\item $f$ is a solution of $E$,
\item $\theta(x) \ne \eps$ for every $x \in \Xi$,
\item $\px{(g \circ f)(x_i)} = \sum p_{ij} \px{g(x_j)}$ for all
    $i, j$ if $g: \Xi^* \to \Sigma^*$ is a morphism of the same
    length type as $\theta$,
\item $r$ of the vectors $(p_{1j}, \dots, p_{nj}) \in \Q(X)^n$, where
    $j = 1, \dots, n$, are linearly independent.
\end{enumerate}
\end{lemma}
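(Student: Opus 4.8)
The plan is to build $f$ and $\theta$ from the factorization supplied by Lemma~\ref{lem:elemtrans}, exactly as in Lemma~\ref{lem:rdim}, and then to transport the rank information from the length-type matrices into the coefficient polynomials $p_{ij}$. Concretely, I would take $h = \theta \circ \phi_m \circ \dots \circ \phi_1 \circ \alpha$ as in Lemma~\ref{lem:elemtrans} and set $f = \phi_m \circ \dots \circ \phi_1 \circ \alpha$. Conditions (1), (2) and (3) are then immediate: (1) and (2) are part of the conclusion of Lemma~\ref{lem:elemtrans}, and (3) is the statement that $\theta(x)\ne\eps$ for all $x$. The content is in producing the $p_{ij}$ satisfying (4) and in establishing the linear independence in (5).

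For condition (4), the idea is that composing with an elementary transformation acts on the $\px{\cdot}$ values by a linear substitution over $\Q(X)$, in the same way that it acts on length types by the integer matrices $A_\phi$ in Lemma~\ref{lem:rdim}. If $\phi$ is the regular transformation with $\phi(y)=xy$, then $(g\circ\phi)(y) = g(x)g(y)$, so by \eqref{eq:prodp} we get $\px{(g\circ\phi)(y)} = \px{g(x)} + X^{|g(x)|}\px{g(y)}$, while $\px{(g\circ\phi)(z)} = \px{g(z)}$ for $z\ne y$. For the singular transformation with $\phi(y)=x$ we similarly get $\px{(g\circ\phi)(y)} = \px{g(x)}$. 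In each case the exponent $|g(x)|$ is determined by the length type of $g$, which by hypothesis equals that of $\theta$; this is exactly why condition (4) is phrased only for morphisms $g$ of the same length type as $\theta$. I would then define the matrix $P = (p_{ij})$ as the ordered product $A_\alpha A_{\phi_1}\cdots A_{\phi_m}$ of these single-step substitution matrices over $\Q(X)$, mirroring the definition of $A$ in Lemma~\ref{lem:rdim}, so that iterating the single-step identities yields $\px{(g\circ f)(x_i)} = \sum_j p_{ij}\px{g(x_j)}$.

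For condition (5), the key observation is that each single-step substitution matrix over $\Q(X)$ has the same rank as its integer counterpart $A_\phi$ from Lemma~\ref{lem:rdim}: a regular transformation gives an upper-triangular-type matrix with unit diagonal entries (the off-diagonal entry being a power of $X$ rather than the integer $1$), hence rank $n$, while a singular transformation gives rank $n-1$ for the same structural reason as in the integer case. The diagonal matrix coming from $\alpha$ again has rank $n-s$. So the same rank count as in Lemma~\ref{lem:rdim} applies over the field $\Q(X)$, giving $\operatorname{rank} P \ge n-s-t \ge r$, which is precisely the statement that $r$ of the columns $(p_{1j},\dots,p_{nj})$ are linearly independent over $\Q(X)$.

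The main obstacle I anticipate is bookkeeping rather than conceptual: I must verify that the nonzero entries introduced by a regular step (powers $X^{|g(x)|}$) never vanish and never cancel the diagonal contributions in the product, so that the rank argument over $\Q(X)$ genuinely parallels the integer argument. Since each elementary-step matrix is triangular up to a single row, its rank is read directly off the pivot structure and is insensitive to whether the off-diagonal entry is $1$ or a power of $X$; the product of such matrices with the rank-$(n-s)$ diagonal matrix then has rank at least $n-s-t$ by the same reasoning used for $A$ in Lemma~\ref{lem:rdim}. The only genuinely new point over the rational-number case is confirming that passing from integer entries to polynomial entries preserves these ranks, which it does because a single-row modification changes the rank of a full-rank matrix by at most one and the pivot analysis is identical.
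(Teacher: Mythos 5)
Your proposal is correct and takes essentially the same route as the paper: factor $h$ through Lemma~\ref{lem:elemtrans}, set $f = \phi_m \circ \dots \circ \phi_1 \circ \alpha$, represent each elementary step as a substitution matrix over $\Q(X)$ whose entries depend only on the intermediate length types (hence only on the length type of $\theta$), and then run the same rank count as in Lemma~\ref{lem:rdim} to get rank at least $n-s-t \geq r$. The only slip is immaterial: by your own identity $\px{(g\circ\phi)(y)} = \px{g(x)} + X^{|g(x)|}\px{g(y)}$, the power of $X$ sits on the diagonal and the $1$ off the diagonal (as in the paper's matrix $B_{\phi,L}$), not the other way around, but the matrix is nonsingular in either case.
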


\begin{proof}
The proof is quite similar to the proof of Lemma \ref{lem:rdim}.

For arbitrary morphisms
$F: \Xi^* \to \Xi^*$ and $G: \Xi^* \to \Sigma^*$
and length type $L$, define an $n$-dimensional column vector
\begin{math}
    \px{G} = (\px{G(x_1)}, \dots, \px{G(x_n)})^T
\end{math}
and an $n \times n$ polynomial matrix
\begin{math}
    B_{F, L} = ( b_{ij} ) ,
\end{math}
where
\begin{equation*}
    b_{ij} = \sum_{u x_j \leq F(x_i)} X^{\len{L}(u)} .
\end{equation*}
If $L$ is the length type of $G$, then
\begin{math}
    \px{G \circ F} = B_{F, L} \px{G} .
\end{math}
More generally, if $F_1, \dots, F_m$ are morphisms $\Xi^* \to \Xi^*$
and $L_k$ is the length type of $G \circ F_m \circ \dots \circ F_{k+1}$,
then
\begin{equation*}
    \px{G \circ F_m \circ \dots \circ F_1}
    = B_{F_1, L_1} \dots B_{F_m, L_m} \px{G} .
\end{equation*}
The matrices $B_{F, L}$ will be used to define the polynomials $p_{ij}$.

Let
\begin{math}
    h = \theta \circ \phi_m \circ \dots \circ \phi_1 \circ \alpha
\end{math}
as in Lemma \ref{lem:elemtrans}.
Let $f = \phi_m \circ \dots \circ \phi_1 \circ \alpha$.
The first three conditions are satisfied by $\theta$ and $f$.
The rank of $f$ is $n - s - t \geq r$
if $s$ and $t$ are as in Lemma \ref{lem:elemtrans}.

Let $L$ be the length type of $\theta$
and let $g$ be a morphism of length type $L$.
Then
\begin{math}
    \px{g \circ f}
    = \px{g \circ \phi_m \circ \dots \circ \phi_1 \circ \alpha}
    = B \px{g} ,
\end{math}
where
\begin{math}
    B = B_{\alpha, L_0} B_{\phi_1, L_1} \dots B_{\phi_m, L_m}
\end{math}
and $L_k$ is the length type of $g \circ \phi_m \circ \dots \circ \phi_{k+1}$.
Let
\begin{math}
    B = (p_{ij}) .
\end{math}
Then the fourth condition holds, because $\px{g \circ f} = B \px{g}$.

To prove that the last condition holds,
it must be proved that the rank of the matrix $B$ is at least $r$.
This can be done by determining the ranks of the matrices
$B_{\alpha, L}$ and $B_{\phi_k, L}$.

The matrix $B_{\alpha, L}$ is a diagonal matrix
and the $i$th element on the diagonal is 0
if $\alpha(x_i) = \eps$ and 1 otherwise.
Thus the rank of $B_{\alpha, L}$ is $n - s$.

If $\phi$ is the elementary transformation defined by $\phi(x_1) = x_2 x_1$,
then
\begin{equation*}
    B_{\phi, L} =
    \begin{pmatrix}
        X^{\len{L}(x_2)} & 1 & 0 & \dots & 0 \\
        0 & 1 & 0 & \dots & 0 \\
        0 & 0 & 1 & \dots & 0 \\
        \dots \\
        0 & 0 & 0 & \dots & 1
    \end{pmatrix}
\end{equation*}
is a matrix of rank $n$
(this is an identity matrix
except for the first two elements on the first row).
In general, the rank of $B_{\phi, L}$ is $n$
for every regular elementary transformation $\phi$.

If $\phi$ is the elementary transformation defined by $\phi(x_1) = x_2$,
then
\begin{equation*}
    B_{\phi, L} =
    \begin{pmatrix}
        0 & 1 & 0 & \dots & 0 \\
        0 & 1 & 0 & \dots & 0 \\
        0 & 0 & 1 & \dots & 0 \\
        \dots \\
        0 & 0 & 0 & \dots & 1
    \end{pmatrix}
\end{equation*}
is a matrix of rank $n - 1$
(again, this is an identity matrix
except for the first two elements on the first row).
In general, the rank of $B_{\phi, L}$ is $n - 1$
for every singular elementary transformation $\phi$.

The rank of $B_{\alpha, L_0}$ is $n - s$,
$t$ of the matrices $B_{\phi_k, L_k}$ have rank $n - 1$ and
the rest have rank $n$.
Thus the rank of $B$ is at least $n - s - t$, which is at least $r$.
\end{proof}

With the help of these lemmas,
we are going to analyze solutions of some fixed length type.
Principal (or fundamental) solutions,
which were implicitly present in the previous lemmas (see \cite{Lo83}),
have been used in connection with fixed lengths
also in \cite{Ho00} and \cite{Ho01}.

\begin{theorem} \label{thm:rank}
Let $E_1, \dots, E_m$ be a system of equations on $n$ unknowns and
let $L \in \No^n$.
Let
\begin{math}
    q_{ij} = \qx{E_i}{x_j}{L}.
\end{math}
If $\sol{r}{L}(E_1, \dots, E_m) \ne \varnothing$,
then the rank of the $m \times n$ polynomial matrix $(q_{ij})$
is at most $n - r$.
If the rank of the matrix is 1,
at most one component of $L$ is zero and
the equations are nontrivial,
then
\begin{math}
    \sol{}{L}(E_1) = \dots = \sol{}{L}(E_m) .
\end{math}
\end{theorem}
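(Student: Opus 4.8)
The plan is to treat the two assertions separately, in both cases using Theorem \ref{thm:weqpeq} to pass between word equations and linear equations over the field $\Q(X)$ of rational functions. Write $Q = (q_{ij})$ for the $m \times n$ matrix and, for a morphism $g$, let $\px g = (\px{g(x_1)}, \dots, \px{g(x_n)})^T$ as in Lemma \ref{lem:rdim2}. By Theorem \ref{thm:weqpeq}, a morphism $h$ of length type $L$ lies in $\sol{}{L}(E_i)$ precisely when the $i$th row of $Q$ annihilates $\px h$, so that $h \in \sol{}{L}(E_1, \dots, E_m)$ if and only if $Q \px h = 0$.

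For the first claim I would begin with $h \in \sol{r}{L}(E_1, \dots, E_m)$ and apply Lemma \ref{lem:rdim2}. Since $h$ solves each $E_i$ and the factorization $h = \theta \circ f$ is constructed from $h$ alone (via the elementary transformations of Lemma \ref{lem:elemtrans}), the morphism $f$ is a solution of every $E_i$, and hence so is $g \circ f$ for any morphism $g$; moreover, if $g$ has the same length type as $\theta$, then $g \circ f$ has length type $L$. Writing $B = (p_{ij})$ for the polynomial matrix of Lemma \ref{lem:rdim2}, condition (4) gives $\px{g \circ f} = B \px g$, and so Theorem \ref{thm:weqpeq} yields $Q B \px g = 0$ for all such $g$. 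The key step is to upgrade this to $QB = 0$ as a matrix over $\Q(X)$: because $\theta(x) \ne \eps$ for every $x$, varying the value of $g$ on a single unknown $x_j$ between two distinct words of the prescribed length produces a vector $\px g - \px{g'}$ supported on coordinate $j$ with a nonzero polynomial entry, and $QB(\px g - \px{g'}) = 0$ then forces the $j$th column of $QB$ to vanish. Ranging over $j$ gives $QB = 0$, so every column of $B$ lies in $\ker Q$. By condition (5), $r$ of these columns are linearly independent, whence $\dim \ker Q \ge r$ and $\mathrm{rank}(Q) = n - \dim \ker Q \le n - r$.

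For the second claim, rank one means that every row of $Q$ is a scalar multiple over $\Q(X)$ of a fixed nonzero vector $\mathbf q$, say row $i$ equals $\lambda_i \mathbf q$. If every row is in fact nonzero, then each $\lambda_i \ne 0$, so membership $h \in \sol{}{L}(E_i)$ is equivalent to $\lambda_i (\mathbf q \cdot \px h) = 0$, i.e.\ to $\mathbf q \cdot \px h = 0$; as this condition is independent of $i$, all the solution sets coincide and $\sol{}{L}(E_1) = \dots = \sol{}{L}(E_m)$.

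The remaining, and in my view principal, obstacle is thus to show that no row $(q_{i1}, \dots, q_{in})$ vanishes. A vanishing row would, by Theorem \ref{thm:weqpeq}, make every morphism of length type $L$ a solution of the nontrivial equation $E_i : u = v$. I would delete from $u$ and $v$ the unknowns of length zero (of which there is at most one, say $x_{j_0}$), and use that generic images of distinct positive-length unknowns can be chosen to start with different letters to conclude that $u$ and $v$ have the same positive-length skeleton; hence they can differ only in the placement of $x_{j_0}$. Reading the positions of $x_{j_0}$ against the strictly increasing sequence of skeleton gap-lengths then gives $\qx{E_i}{x_{j_0}}{L} \ne 0$ unless $u = v$, contradicting nontriviality. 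Here the hypothesis that at most one component of $L$ is zero is exactly what is required, since two length-zero unknowns already allow an all-zero row, as in $x_{j_0} x_{j_1} = x_{j_1} x_{j_0}$.
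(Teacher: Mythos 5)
Your proof of the first claim contains a genuine gap. It hinges on the assertion that the factorization $h = \theta \circ f$ of Lemma \ref{lem:rdim2} is ``constructed from $h$ alone'', so that a single morphism $f$, with a single matrix $B = (p_{ij})$ satisfying condition (5), is a solution of every $E_i$ simultaneously. That assertion is false: the elementary transformations in Lemma \ref{lem:elemtrans} are chosen by inspecting the equation (one compares the leading unknowns of the two sides and the lengths of their images), so the factorization depends on the pair $(h, E_i)$, not on $h$ alone. For a concrete failure, take $h(x_1) = 11$, $h(x_2) = h(x_3) = 1$, $E_1 : x_1 = x_2 x_3$ and $E_2 : x_1 = x_3 x_2$; then $h$ solves both equations, but running the transformation process on $(h, E_1)$ yields the regular step $x_1 \mapsto x_2 x_1$ followed by the singular step $x_1 \mapsto x_3$, i.e.\ $f(x_1) = x_2 x_3$, $f(x_2) = x_2$, $f(x_3) = x_3$ with $\theta(x_2) = \theta(x_3) = 1$, and this $f$ is not a solution of $E_2$ since $x_2 x_3 \ne x_3 x_2$ in $\Xi^*$. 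If instead you apply Lemma \ref{lem:rdim2} to each $E_i$ separately, you get a different matrix $B_i$ for each $i$, and your computation only shows that the columns of $B_i$ lie in the kernel of the $i$th row of $Q$ — which gives no lower bound on $\dim \ker Q$ and hence no upper bound on $\mathrm{rank}(Q)$.

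The missing idea — and this is exactly how the paper closes the hole — is a reduction to a single equation before invoking Lemma \ref{lem:rdim2}. The paper treats $r = 1$ separately (a rank-one solution has some nonempty image, so $\px{h} \ne 0$ lies in $\ker Q$ by Theorem \ref{thm:weqpeq}, giving rank at most $n-1$), and for $r > 1$ it uses the classical fact that a finite system has the same nonperiodic solutions as one equation $E$. Lemma \ref{lem:rdim2} is then applied to $E$ alone: the resulting $f$ has rank $n - s - t \ge r \ge 2$, hence is itself a nonperiodic solution of $E$, hence solves every $E_l$, and therefore so does $g \circ f$ for every $g$. From that point on your argument (forcing $QB = 0$ by varying $g$ on one unknown at a time, then counting dimensions via condition (5)) coincides with the paper's. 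Your proof of the second claim, on the other hand, is correct and in fact more complete than the paper's: the paper merely asserts that under the stated hypotheses no row of $(q_{ij})$ vanishes, whereas your skeleton argument — equality of the positive-length skeletons, then reading the placements of the single zero-length unknown against the strictly increasing prefix lengths — is a genuine proof of that assertion.
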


\begin{proof}
Let $h \in \sol{r}{L}(E_1, \dots, E_m)$.
If $r = 1$, the first claim follows from Theorem \ref{thm:weqpeq},
so assume that $r > 1$.
Let $E$ be an equation that has the same nonperiodic solutions as the system.
Lemma \ref{lem:rdim2} will be used for this equation.
Fix $k$ and let $g_1: \Xi^* \to \Sigma^*$ be the morphism determined by
\begin{math}
    g_1(x_i) = 1^{|\theta(x_i)|}
\end{math}
for all $i$
and let $g_2: \Xi^* \to \Sigma^*$ be the morphism determined by
\begin{math}
    g_2(x_k) = 21^{|\theta(x_k)| - 1}
\end{math}
and
\begin{math}
    g_2(x_i) = 1^{|\theta(x_i)|}
\end{math}
for all $i \ne k$.
Then $g_1 \circ f$ and $g_2 \circ f$ are solutions of every $E_l$, so
\begin{equation*}
    \sum_{i=1}^n \qx{E_l}{x_i}{L} \px{(g_1 \circ f)(x_i)} = 0
    \qquad \text{and} \qquad
    \sum_{i=1}^n \qx{E_l}{x_i}{L} \px{(g_2 \circ f)(x_i)} = 0
\end{equation*}
for all $l$ by Theorem \ref{thm:weqpeq}.
Because also
\begin{math}
    \px{(g_1 \circ f)(x_i)} = \sum_{j=1}^n p_{ij} \px{g_1(x_j)}
\end{math}
and
\begin{math}
    \px{(g_2 \circ f)(x_i)} = \sum_{j=1}^n p_{ij} \px{g_2(x_j)} ,
\end{math}
we get
\begin{equation*}
\begin{split}
    0 =& \sum_{i=1}^n \qx{E_l}{x_i}{L}
        (\px{(g_2 \circ f)(x_i)} - \px{(g_1 \circ f)(x_i)}) \\
    =& \sum_{i=1}^n \qx{E_l}{x_i}{L}
        \sum_{j=1}^n p_{ij} (\px{g_2(x_j)} - \px{g_1(x_j)})
    = \sum_{i=1}^n \qx{E_l}{x_i}{L} p_{ik}
\end{split}
\end{equation*}
for all $l$.
Thus the vectors $(p_{1j}, \dots, p_{nj})$ are solutions
of the linear system of equations determined by the matrix $(q_{ij})$.
Because at least $r$ of these vectors are linearly independent,
the rank of the matrix is at most $n - r$.

If at most one component of $L$ is zero and the equations are nontrivial,
then all rows of the matrix are nonzero.
If also the rank of the matrix is 1,
then all rows are multiples of each other and
the second claim follows by Theorem \ref{thm:weqpeq}.
\end{proof}

\section{Applications} \label{sect:appl}

Based on Theorem \ref{thm:rank},
the polynomial and linear algebraic methods will be developed further
in Section \ref{sect:solsets}.
However, Theorem \ref{thm:rank} is already strong enough
to provide reproofs, generalizations and improvements of some results.

The \emph{graph} of a system of word equations is the graph
where $\Xi$ is the set of vertices
and there is an edge between $x$ and $y$
if one of the equations in the system is of the form $x \dd = y \dd$.
The following well-known theorem can be proved
with the help of Theorem \ref{thm:rank}.

\begin{theorem}[Graph lemma] \label{thm:graph}
Consider a system of equations whose graph has $r$ connected components.
If $h$ is a solution of this system and $h(x_i) \ne \eps$ for all $i$,
then the rank of $h$ is at most $r$.
\end{theorem}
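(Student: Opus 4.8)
The plan is to read the conclusion off Theorem \ref{thm:rank} by analysing the matrix $(q_{ij})$ at $X=0$. Write the system as $E_1, \dots, E_m$ with $E_i : u_i = v_i$, let $L$ be the length type of the given solution $h$, and let $\rho$ be its rank, so that $h \in \sol{\rho}{L}(E_1, \dots, E_m)$. Putting $q_{ij} = \qx{E_i}{x_j}{L}$, Theorem \ref{thm:rank} says that the $m \times n$ polynomial matrix $(q_{ij})$ has rank at most $n - \rho$ over $\Q(X)$. Hence it suffices to establish a matching lower bound coming from the graph, namely that this matrix has rank at least $n - r$; combining the two inequalities then gives $n-\rho \ge n-r$, i.e.\ $\rho \le r$.

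To bound the rank from below I would specialize the matrix at $X = 0$, passing to the constant-term matrix $M = (q_{ij}(0))$ over $\Q$. Here the hypothesis $h(x_i) \ne \eps$ for all $i$ is essential: it forces every component of $L$ to be positive, so $\len{L}(w) = 0$ holds only for $w = \eps$. Inspecting the definition of $\qx{E_i}{x_j}{L}$, the only monomials $X^{\len{L}(\cdots)}$ contributing to the constant term are those with an empty prefix, i.e.\ those coming from the \emph{first} letter on each side of $E_i$. Thus the constant term of $q_{ij}$ is $+1$ if $u_i$ begins with $x_j$, it is $-1$ if $v_i$ begins with $x_j$, and it is $0$ otherwise (a cancellation to $0$ occurs when both sides begin with the same unknown). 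In other words $M$ is exactly the oriented incidence matrix of the graph: each equation of the form $x \dd = y \dd$ with $x \ne y$ contributes the incidence row of an edge between $x$ and $y$, and every other equation contributes a zero row.

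It is a standard fact of algebraic graph theory that the oriented incidence matrix of a graph on $n$ vertices with $r$ connected components has rank $n - r$ over $\Q$; repeated edges (several equations giving the same edge) and loop-like equations (zero rows) do not change this. Finally, specialization at $X=0$ cannot increase rank: any $s \times s$ submatrix of $M$ with nonzero determinant lifts to an $s \times s$ submatrix of $(q_{ij})$ whose determinant is a polynomial that does not vanish at $0$, hence is nonzero in $\Q(X)$. Therefore the rank of $(q_{ij})$ over $\Q(X)$ is at least the rank of $M$ over $\Q$, which is $n - r$, completing the argument.

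The only genuinely delicate point is the identification of $M$ with the incidence matrix, which rests entirely on the positivity of $L$: without the assumption $h(x_i) \ne \eps$, a vanishing component of $L$ would allow interior positions of $u_i$ and $v_i$ to feed into the constant term and destroy the clean incidence structure. By contrast, the value $n-r$ for the incidence-matrix rank and the non-increase of rank under specialization are both routine, so I expect the bookkeeping around $L \in \Ni^n$ to be the crux rather than any of the linear algebra.
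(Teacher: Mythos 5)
Your proof is correct, and it shares its skeleton with the paper's: both apply Theorem \ref{thm:rank} to bound the rank of the polynomial matrix $(q_{ij})$ above by $n-\rho$ (where $\rho$ is the rank of $h$), and both use the hypothesis $h(x_i) \ne \eps$ to read a lower bound of $n-r$ off the constant terms, i.e.\ off the specialization $X=0$. The difference lies in how that lower bound is certified. The paper first passes (``we can assume'') to a spanning-forest subsystem, with the unknowns reordered so that the equations are exactly $x_j \dd = x_{k_j} \dd$ with $k_j < j$, one per non-root unknown; deleting the $r$ root columns then leaves an $(n-r) \times (n-r)$ matrix whose diagonal entries have nonzero constant term while all entries above the diagonal are divisible by $X$, so its determinant does not vanish at $X=0$. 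You instead keep the full system, identify $(q_{ij})(0)$ as the oriented incidence matrix of the graph (with zero rows for equations whose two sides begin with the same unknown), and quote the standard fact that this matrix has rank $n-r$, together with the observation that evaluation at $X=0$ cannot increase rank. These are two packagings of the same computation---the paper's triangular minor is precisely the textbook proof of the incidence-matrix rank fact, specialized to a forest---so what your version buys is the elimination of the reduction step (no passing to a subsystem, no reordering of unknowns), at the cost of importing a result from algebraic graph theory, whereas the paper's argument is self-contained. You also located the crux correctly: the hypothesis $h(x_i) \ne \eps$ is exactly what confines the constant term of $q_{ij}$ to the first letters of the two sides of $E_i$, which is where the paper's divisibility-by-$X$ claims come from.
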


\begin{proof}
We can assume that the connected components are
\begin{equation*}
    \{x_1, \dots, x_{i_2-1}\},
    \{x_{i_2}, \dots, x_{i_3-1}\},
    \dots,
    \{x_{i_r}, \dots, x_n\}
\end{equation*}
and the equations are
\begin{equation*}
    x_j \dd = x_{k_j} \dd,
\end{equation*}
where
\begin{math}
    j \in \{1, \dots, n\} \smallsetminus \{1, i_2, \dots, i_r\}
\end{math}
and $k_j < j$.
Let $q_{ij}$ be as in Theorem \ref{thm:rank}.
If we remove the columns $1, i_2, \dots, i_r$
from the $(n-r) \times n$ matrix $(q_{ij})$,
we obtain a square matrix $M$
where the diagonal elements are not divisible by $X$,
but all elements above the diagonal are divisible by $X$.
This means that $\det(M)$ is not divisible by $X$, so $\det(M) \ne 0$.
Thus the rank of the matrix $(q_{ij})$ is $n-r$ and
$h$ has rank at most $r$ by Theorem \ref{thm:rank}.
\end{proof}

The next theorem generalizes a result from \cite{CzKa07}
for more than three unknowns.

\begin{theorem}
If a pair of nontrivial equations on $n$ unknowns
has a solution $h$ of rank $n-1$
where no two of the unknowns commute,
then there is a number $k \geq 1$ such that the equations are of the form
\begin{math}
    x_1 \dd = x_2^k x_3 \dd .
\end{math}
\end{theorem}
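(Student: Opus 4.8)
The plan is to reduce the statement to a rank computation via Theorem \ref{thm:rank} and then extract the prefix structure of the two equations by a direct combinatorial analysis of the common solution $h$. First I would record that ``no two unknowns commute'' forces $h(x) \ne \eps$ for every $x$ (the empty word commutes with everything), so the length type $L$ of $h$ has all components positive; I may also assume the two equations are reduced, i.e.\ the two sides of each begin with different unknowns, since a common leading unknown can be cancelled using $h(u) = h(v)$ without affecting nontriviality, the rank of $h$, or the non-commutation hypothesis. Writing $q_{ij} = \qx{E_i}{x_j}{L}$, Theorem \ref{thm:rank} with $r = n-1$ shows that the $2 \times n$ matrix $(q_{ij})$ has rank at most $n - (n-1) = 1$. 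Because the equations are nontrivial and all components of $L$ are positive, neither row is zero (a zero row would, by Theorem \ref{thm:weqpeq}, make every morphism of length type $L$ a solution of a nontrivial equation), so the rank is exactly $1$, the two rows are proportional over $\Q(X)$, and $\sol{}{L}(E_1) = \sol{}{L}(E_2)$.

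Next I would pin down the first letters. Evaluating at $X = 0$, the constant term of $q_{ij}$ equals the net multiplicity of $x_j$ as a leading letter, so the constant-term row of $E_i$ is $e_{a_i} - e_{b_i}$, where $x_{a_i}, x_{b_i}$ are the leading unknowns of its two sides; reducedness gives $a_i \ne b_i$, so these rows are nonzero. Proportionality of the rows of $(q_{ij})$ forces the two difference vectors to be parallel, and two differences of distinct standard basis vectors are parallel only when the index pairs coincide (possibly after interchanging the two sides). Hence, after swapping the sides of $E_2$ if necessary and renaming the unknowns, both equations take the form $x_1 \dd = x_2 \dd$ with $x_1 \ne x_2$, and it remains to analyse the initial segment of the two right-hand sides.

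The main work — and the step I expect to be the real obstacle — is to show that each right-hand side begins with the \emph{same} power $x_2^k$ followed by the \emph{same} unknown $x_3 \notin \{x_1, x_2\}$. Write $d = \len{L}(x_2)$ and let $x_2^{k_i}$ be the maximal leading run of $v_i$, so the leading occurrences of $x_2$ contribute $-(1 + X^{d} + \dd + X^{(k_i-1)d})$ to $q_{i2}$, whereas occurrences of $x_2$ inside $u_i$ contribute only terms of degree at least $|h(x_1)|$. From $h(u_i) = h(v_i)$ both sides share this word as a prefix, so $h(x_1)$ and $h(x_2)$ are prefix-comparable; since they do not commute, Theorem \ref{thm:commutation} forbids either image from being a power of the other, and Theorem \ref{thm:finewilf} bounds the length of their common prefix, ensuring that the periodic prefix $h(x_2)^{k_i}$ cannot interfere with the $u_i$-contributions in the lowest degrees. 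Writing $q_{2j} = \lambda q_{1j}$ with $\lambda \in \Q(X)$, the constant terms $q_{i1}(0) = 1$ give $\lambda(0) = 1$, so comparing the lowest-degree terms of $q_{12}$ and $q_{22}$ forces the two geometric segments to coincide, whence $k_1 = k_2 =: k \ge 1$; an analogous comparison at the first exponent beyond the run identifies a single common following unknown $x_3$, and non-commutation gives $x_3 \notin \{x_1, x_2\}$. The delicate point throughout is controlling the interaction between the $h(x_2)$-periodic prefix and the break it eventually suffers, for which Fine and Wilf's theorem is the essential tool.
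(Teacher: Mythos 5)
Your first two paragraphs are fine, and they essentially reproduce the paper's opening move (the paper invokes the Graph lemma, Theorem \ref{thm:graph}, which is itself derived from Theorem \ref{thm:rank}; your constant-term argument is an acceptable substitute). The fatal problem is the third paragraph, and it is not just an omitted computation: the intermediate claims you make there are \emph{false}. After both equations are written as $x_1 \dd = x_2 \dd$, the rank-one condition, the normalization $\lambda(0)=1$, non-commutation and the existence of a rank-$(n-1)$ solution do not force the leading $x_2$-runs of the two right-hand sides to be equal, nor force a run to be followed by an unknown outside $\{x_1,x_2\}$. Take $n=3$, $h(x_1)=a$, $h(x_2)=ab$, $h(x_3)=bab$ (rank $2$, no two images commute), and
\begin{equation*}
    E_1 : x_1x_3x_2 = x_2x_1x_3, \qquad E_2 : x_1x_3x_2x_2 = x_2x_2x_1x_3, \qquad L = (1,2,3).
\end{equation*}
The rows of $(q_{ij})$ are $(1-X^2,\ X^4-1,\ X-X^3)$ and $(1-X^4,\ X^4+X^6-1-X^2,\ X-X^5)$, so the matrix has rank one with $\lambda = 1+X^2$ and $\lambda(0)=1$; yet the leading runs are $k_1=1$ and $k_2=2$, and the run of $E_1$ is followed by $x_1$, not by a third unknown. (The theorem is still true for this pair, but only after exchanging the roles of $x_1$ and $x_2$, i.e.\ reading each equation from the other side, which gives $k=1$ for both.) Your comparison breaks down because $\lambda$ may contain terms of degree inside the run region (here $X^{\len{L}(x_2)}$), so ``$\lambda(0)=1$ plus lowest-degree comparison'' identifies only the single term $-1$ in each $q_{i2}$ and says nothing about the rest of the geometric segment. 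Worse, occurrences of $x_2$ in the left-hand side can genuinely land inside the periodic region once a third unknown intervenes: in $x_1x_3x_2x_2x_1x_3 = x_2x_2x_2x_1x_3x_2$, also solved by this $h$, the left occurrence of $x_2$ at position $4 = 2\len{L}(x_2)$ cancels a run term, so $q_{i2}$ need not display the true run at all.

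What is missing is precisely the step the paper performs \emph{first}, and which cannot be extracted from the polynomial matrix: among the four maximal $\{x_1,x_2\}$-prefix blocks $x_1u_1, x_2v_1, x_1u_2, x_2v_2$, choose (renaming unknowns and sides) the one whose image under $h$ is \emph{shortest} and call it $x_2v$; then prove by word combinatorics — prefix comparability and Fine and Wilf (Theorem \ref{thm:finewilf}) — that $v \in x_2^*$ and is followed by a third unknown, so that this one equation has the form $x_1 \dd = x_2^k x_3 \dd$. Only then does a lowest-degree argument work, and it is applied to the vanishing minor $q_{12}q_{23}-q_{13}q_{22}=0$ rather than to a single column: because $E_1$'s structure is already known, $q_{13}q_{22}$ has uncancellable lowest term $X^{k\len{L}(x_2)}$, forcing $q_{23}$ to have lowest term $-X^{k\len{L}(x_2)}$, which pins the first third-unknown occurrence of $E_2$'s right side to position $k\len{L}(x_2)$; the combinatorial argument is then repeated for $E_2$. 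Note that the rank-one condition is invariant under swapping the two sides of an equation (negating a row) and under renaming $x_1 \leftrightarrow x_2$ (permuting columns), so no purely linear-algebraic manipulation can select the correct roles; the selection is dictated by the lengths $|h(x_1u_i)|, |h(x_2v_i)|$, and Fine and Wilf's theorem is the tool that exploits that choice — not, as in your sketch, a device for cleaning up low-degree interference, which in fact can occur.
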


\begin{proof}
By Theorem \ref{thm:graph},
the equations must be of the form $x_1 \dd = x_2 \dd$.
Let them be
\begin{equation*}
    x_1 u y \dd = x_2 v z \dd
    \qquad \text{and} \qquad
    x_1 u' y' \dd = x_2 v' z' \dd,
\end{equation*}
where $u, v, u', v' \in \{x_1, x_2\}^*$
and $y, z, y', z' \in \{x_3, \dots, x_n\}$.
It can be assumed that $z = x_3$ and
\begin{equation*}
    |h(x_2 v)| \leq |h(x_1 u)|, |h(x_1 u')|, |h(x_2 v')|.
\end{equation*}
If it were $|h(x_1 u)| = |h(x_2 v)|$,
then $h(x_1)$ and $h(x_2)$ would commute,
so $|h(x_1 u)| > |h(x_2 v)|$.
If $v$ would contain $x_1$,
then $h(x_1)$ and $h(x_2)$ would commute by Theorem \ref{thm:finewilf},
so $v = x_2^{k-1}$ for some $k \geq 1$.

Let $L$ be the length type of $h$ and
let $q_{ij}$ be as in Theorem \ref{thm:rank}.
By Theorem \ref{thm:rank},
the rank of the matrix $(q_{ij})$ must be 1 and thus
\begin{math}
    q_{12} q_{23} - q_{13} q_{22} = 0.
\end{math}
The term of
\begin{math}
    q_{13} q_{22}
\end{math}
of the lowest degree is $X^{|h(x_2^k)|}$.
The same must hold for
\begin{math}
    q_{12} q_{23},
\end{math}
and thus the term of $q_{23}$ of the lowest degree must be $-X^{|h(x_2^k)|}$.
We know that $x_2 v = x_2^k$ and assumed that $|h(x_2 v)| \leq |h(x_2 v')|$.
If it were $|h(x_2 v)| < |h(x_2 v')|$,
then $h(x_3)$ would start in $h(x_2 v' z' \dots)$
before the end of $h(x_2 v')$,
which is not possible.
This means that $|h(x_2 v')| = |h(x_2^k)| \leq |h(x_1 u')|$ and $z' = x_3$.
As above, we conclude that $|h(x_2 v')| < |h(x_1 u')|$,
$v'$ cannot contain $x_1$ and $v' = x_2^{k-1}$.
\end{proof}

It was proved in \cite{Ko98} that if
\begin{equation*}
      s_0 u_1^i s_1 \dots u_m^i s_m
    = t_0 v_1^i t_1 \dots v_n^i t_n
\end{equation*}
holds for $m+n+3$ consecutive values of $i$,
then it holds for all $i$.
By using similar ideas as in Theorem \ref{thm:rank},
we improve this bound to $m+n$ and
prove that the values do not need to be consecutive.
In \cite{Ko98} it was also stated that
the arithmetization and matrix techniques in \cite{Tu87}
would give a simpler proof of a weaker result.
Similar questions have been studied in \cite{HoKo07} and
there are relations to independent systems \cite{Pl03}.

\begin{theorem}
Let $m,n \geq 1$, $s_j, t_j \in \Sigma^*$ and $u_j, v_j \in \Sigma^+$.
Let
\begin{equation*}
    U_i = s_0 u_1^i s_1 \dots u_m^i s_m
    \qquad \text{and} \qquad
    V_i = t_0 v_1^i t_1 \dots v_n^i t_n .
\end{equation*}
If $U_i = V_i$ holds for $m+n$ values of $i$, then it holds for all $i$.
\end{theorem}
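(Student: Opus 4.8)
The plan is to encode both words as polynomials via the map $P$ and to read the hypothesis $U_i = V_i$ as the vanishing of a single function of $i$ with values in $\Q(X)$. First I would expand $\px{U_i}$ using the product formula \eqref{eq:prodp} together with the identity $\px{u^i} = \rx{u}(X^{i|u|}-1)$. Writing $\beta_j = |u_1 \cdots u_j|$ (so $0 = \beta_0 < \beta_1 < \cdots < \beta_m$) and $\alpha_j = |s_0 \cdots s_{j-1}|$, the block $u_j^i$ begins at position $\alpha_j + i\beta_{j-1}$, and after collecting terms one obtains an expression of the shape
\[
\px{U_i} = \sum_{j=0}^{m} C_j \, (X^{\beta_j})^{i},
\]
where each coefficient $C_j \in \Q(X)$ is built from $\px{s_j}$ and the $\rx{u_j}$ and does not depend on $i$. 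The point of passing to $\rx{u_j}$ rather than keeping $\px{u_j^i}$ is exactly that it concentrates all the $i$-dependence into the pure powers $(X^{\beta_j})^i$. An identical computation gives $\px{V_i} = \sum_{j=0}^{n} D_j (X^{\gamma_j})^i$ with $\gamma_j = |v_1 \cdots v_j|$.

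Next I would count the distinct bases $X^b$ occurring in $\px{U_i} - \px{V_i}$. The exponents lie in $\{\beta_0, \dots, \beta_m\} \cup \{\gamma_0, \dots, \gamma_n\}$, a priori $m+n+2$ values. Two of them always coincide: $\beta_0 = \gamma_0 = 0$, and $\beta_m = \gamma_n$. The latter I get for free, because $m+n \geq 2$ means the hypothesis provides $U_i = V_i$, hence $|U_i| = |V_i|$, for at least two values of $i$; as $|U_i|$ and $|V_i|$ are affine in $i$, agreeing at two points forces them to agree identically, so the total $u$-length equals the total $v$-length, i.e.\ $\beta_m = \gamma_n$ (and $\beta_m = \gamma_n > 0$, so this is genuinely a second common value). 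Hence there are at most $m+n$ distinct bases, and
\[
\px{U_i} - \px{V_i} = \sum_{k=1}^{N} c_k \,(X^{b_k})^{i}, \qquad N \le m+n,
\]
with the $b_k$ distinct nonnegative integers and $c_k \in \Q(X)$.

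The conclusion then follows from a generalized Vandermonde argument. This sum vanishes for the $m+n$ given values of $i$; choosing any $N$ of them, say $i_1, \dots, i_N$, the system $\sum_k c_k X^{b_k i_l} = 0$ has matrix $(X^{b_k i_l})_{l,k}$, which I claim is nonsingular over $\Q(X)$, so all $c_k = 0$ and the difference vanishes for every $i$. Since $P$ is injective on words (here $0 \notin \Sigma$ is used), this yields $U_i = V_i$ for all $i$. To see nonsingularity, expand the determinant as $\sum_\sigma \operatorname{sgn}(\sigma)\, X^{\sum_l b_{\sigma(l)} i_l}$: because the $b_k$ are distinct and the $i_l$ are distinct, the rearrangement inequality shows the largest exponent $\sum_l b_{\sigma(l)} i_l$ is attained by a unique permutation, so the top-degree monomial survives and the determinant is a nonzero polynomial in $X$. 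Note this uses only that the values of $i$ are distinct, not that they are consecutive.

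The main obstacle, and the step meriting the most care, is the bookkeeping that brings the count down to exactly $m+n$ rather than $m+n+2$: one must verify that the endpoint exponents genuinely coincide (this is where the length argument enters) and that no off-by-one error slips into the collected coefficients $C_j$ and $D_j$. Everything after that—the Vandermonde nonsingularity and the passage back to words via injectivity of $P$—is routine.
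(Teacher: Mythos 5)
Your proof is correct and follows essentially the same route as the paper's: encode $U_i, V_i$ as polynomials, collect the $i$-dependence into pure powers so that $\px{U_i}-\px{V_i}=\sum_k c_k (X^{b_k})^i$, use the two-point length argument to show $|u_1\cdots u_m|=|v_1\cdots v_n|$ and thereby cut the number of distinct bases to $m+n$, and conclude via a generalized Vandermonde system. The only differences are cosmetic: you keep the coefficients in $\Q(X)$ where the paper calls them polynomials (which strictly requires clearing denominators), and you supply the exchange-argument proof of the Vandermonde determinant's nonvanishing, which the paper merely asserts.
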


\begin{proof}
The equation $U_i = V_i$ is equivalent to $\px{U_i} - \px{V_i} = 0$.
Because
\begin{equation*}
\begin{split}
    \px{U_i} =&
    \sum_{j=1}^m \left( \px{s_{j-1}} + \px{u_j}
        \frac{X^{i|u_j|} - 1}{X^{|u_j|} - 1} X^{|s_{j-1}|} \right)
        X^{i |u_{1} \dots u_{j-1}| + |s_{0} \dots s_{j-2}|} \\
    &+ \px{s_m} X^{i |u_{1} \dots u_{m}| + |s_{0} \dots s_{m-1}|}
\end{split}
\end{equation*}
and $\px{V_i}$ is of a similar form,
this equation can be written as
\begin{equation} \label{eq:1}
    \sum_{j=0}^{m} y_j X^{i |u_1 \dots u_j|}
    + \sum_{k \in K} z_k X^{i |v_1 \dots v_k|} = 0,
\end{equation}
where $y_j, z_k$ are some polynomials that do not depend on $i$
and $K$ is the set of those $k \in \{0, \dots n\}$
for which $|v_1 \dots v_k|$ is not any of the numbers $|u_1 \dots u_j|$
($j = 0, \dots, m$).
If $U_{i_1} = V_{i_1}$ and $U_{i_2} = V_{i_2}$, then
\begin{equation*}
    (i_1 - i_2) |u_1 \dots u_m| = |U_{i_1}| - |U_{i_2}|
    = |V_{i_1}| - |V_{i_2}| = (i_1 - i_2) |v_1 \dots v_n| .
\end{equation*}
Thus $|u_1 \dots u_m| = |v_1 \dots v_n|$ and the size of $K$ is at most $n-1$.
If \eqref{eq:1} holds for $m + 1 + \# K \leq m+n$ values of $i$,
it can be viewed as a system of equations where $y_j, z_k$ are unknowns.
The coefficients of this system form a generalized Vandermonde matrix
whose determinant is nonzero,
so the system has a unique solution $y_j = z_k = 0$ for all $j, k$.
This means that \eqref{eq:1} holds for all $i$ and $U_i = V_i$ for all $i$.
\end{proof}

\section{Sets of Solutions} \label{sect:solsets}

In this section we analyze how the polynomials
\begin{math}
    \qx{E}{x}{L}
\end{math}
behave when $L$ is not fixed.
Let
\begin{equation*}
    \lhp = \set{a_1 X_1 + \dots + a_n X_n}{a_1, \dots, a_n \in \No}
    \subset \Z[X_1, \dots, X_n]
\end{equation*}
be the additive monoid of linear homogeneous polynomials
with nonnegative integer coefficients
on the variables $X_1, \dots, X_n$.
The \emph{monoid ring} of $\lhp$ over $\Z$
is the ring formed by expressions of the form
\begin{equation*}
    a_1 X^{p_1} + \dots + a_k X^{p_k},
\end{equation*}
where $a_i \in \Z$ and $p_i \in \lhp$,
and the addition and multiplication of these generalized polynomials
is defined in a natural way.
This ring is denoted by $\Z[X;\lhp]$.
If $L \in \Z^n$,
then the value of a polynomial $p \in \lhp$
at the point $(X_1, \dots, X_n) = L$
is denoted by $p(L)$,
and the polynomial we get by making this substitution in $s \in \Z[X;\lhp]$
is denoted by $s(L)$.

The ring $\Z[X;\lhp]$ is isomorphic to
the ring $\Z[Y_1, \dots, Y_n]$ of polynomials on $n$ variables.
The isomorphism is given by $X^{X_i} \mapsto Y_i$.
However, the generalized polynomials where the exponents are in $\lhp$
are suitable for our purposes.

If $a_i \leq b_i$ for $i = 1, \dots, n$,
then we use the notation
\begin{equation*}
    a_1 X_1 + \dots + a_n X_n \preceq b_1 X_1 + \dots + b_n X_n .
\end{equation*}
If $p, q \in \lhp$ and $p \preceq q$,
then $p(L) \leq q(L)$ for all $L \in \No^n$.

For an equation $E: x_{i_1} \dots x_{i_r} = x_{j_1} \dots x_{j_s}$ we define
\begin{equation*}
    \sx{E}{x} = \sum_{x_{i_k} = x} X^{X_{i_1} + \dots + X_{i_{k-1}}}
        - \sum_{x_{j_k} = x} X^{X_{j_1} + \dots + X_{j_{k-1}}}
    \in \Z[X;\lhp] .
\end{equation*}
Then $\sx{E}{x}(L) = \qx{E}{x}{L}$.
Theorem \ref{thm:weqpeq} can be formulated
in terms of these generalized polynomials.

\begin{theorem}
A morphism $h: \Xi^* \to \Sigma^*$ of length type $L$
is a solution of an equation $E$ if and only if
\begin{equation*}
    \sum_{x \in \Xi} \sx{E}{x}(L) \px{h(x)} = 0 .
\end{equation*}
\end{theorem}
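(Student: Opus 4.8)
The plan is to reduce this restated theorem to the already-proved Theorem \ref{thm:weqpeq} by observing that the generalized polynomials $\sx{E}{x} \in \Z[X;\lhp]$ were defined precisely so that evaluating them at a length type recovers the earlier coefficients. The key identity, stated just before the theorem, is that $\sx{E}{x}(L) = \qx{E}{x}{L}$ for every length type $L \in \No^n$. Once this identity is in hand, the two conditions
\begin{equation*}
    \sum_{x \in \Xi} \sx{E}{x}(L) \px{h(x)} = 0
    \eand
    \sum_{x \in \Xi} \qx{E}{x}{L} \px{h(x)} = 0
\end{equation*}
are literally the same equation, and Theorem \ref{thm:weqpeq} supplies the equivalence of the second with $h$ being a solution of $E$.

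Concretely, first I would verify the substitution identity $\sx{E}{x}(L) = \qx{E}{x}{L}$. Writing $E: x_{i_1} \dots x_{i_r} = x_{j_1} \dots x_{j_s}$, each monomial $X^{X_{i_1} + \dots + X_{i_{k-1}}}$ in $\sx{E}{x}$ becomes, after the substitution $(X_1, \dots, X_n) = L$, the monomial $X^{(X_{i_1} + \dots + X_{i_{k-1}})(L)}$. The exponent evaluates to $L_{i_1} + \dots + L_{i_{k-1}} = \len{L}(x_{i_1} \dots x_{i_{k-1}})$, since $\len{L}$ is the length morphism determined by $L$ and $L_i = |h(x_i)|$. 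Comparing with the definition of $\qx{E}{x}{L}$ shows the two expressions agree term by term, which is exactly the claimed identity. This is a routine matching of definitions rather than a substantive argument.

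Having established the identity, the proof proper is immediate: for a morphism $h$ of length type $L$, substituting $\sx{E}{x}(L) = \qx{E}{x}{L}$ into the displayed sum shows it equals $\sum_{x \in \Xi} \qx{E}{x}{L} \px{h(x)}$, and by Theorem \ref{thm:weqpeq} this vanishes if and only if $h$ is a solution of $E$. I expect the only point requiring any care — and it is minor — is bookkeeping the correspondence between the formal variables $X_1, \dots, X_n$ of $\lhp$ and the length components $L_1, \dots, L_n$, making sure the substitution $p(L)$ is applied consistently with how $\len{L}$ is defined on words. There is no genuine obstacle here; the content of the theorem lies entirely in Theorem \ref{thm:weqpeq}, and the present statement is merely its reformulation in the generalized-polynomial language of $\Z[X;\lhp]$, which is set up so that the translation is transparent.
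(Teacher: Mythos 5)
Your proposal is correct and matches the paper's intent exactly: the paper states the identity $\sx{E}{x}(L) = \qx{E}{x}{L}$ immediately before the theorem and offers no further proof, treating the statement as a direct reformulation of Theorem \ref{thm:weqpeq}, which is precisely your reduction. Your explicit term-by-term verification of the evaluation identity is the only content, and it is the same (routine) bookkeeping the paper leaves implicit.
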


\begin{example}
Let $E: x_1 x_2 x_3 = x_3 x_1 x_2$.
Then
\begin{equation*}
    \sx{E}{x_1} = 1 - X^{X_3}, \qquad
    \sx{E}{x_2} = X^{X_1} - X^{X_1 + X_3}, \qquad
    \sx{E}{x_3} = X^{X_1 + X_2} - 1.
\end{equation*}
\end{example}

The \emph{length} of an equation $E: u = v$ is $|E| = |uv|$.
The number of occurrences of an unknown $x$ in $E$ is $|E|_x = |uv|_x$.

\begin{theorem} \label{thm:cover}
Let $E_1, E_2$ be a pair of nontrivial equations on $n$ unknowns.
Let $\sol{n-1}{}(E_1) \ne \sol{n-1}{}(E_2)$.
For some unknowns $x_k, x_l$,
the set of length types of solutions of the pair of rank $n-1$
is covered by a union of
$(|E_1|_{x_k} + |E_1|_{x_l})^2$
$(n-1)$-dimensional subspaces of $\Q^n$.
If $V_1, \dots, V_m$ is a minimal such cover and $L \in V_i$ for some $i$,
then $\sol{n-1}{L}(E_1) = \sol{n-1}{L}(E_2)$.
\end{theorem}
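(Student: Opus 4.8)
The plan is to route everything through Theorem \ref{thm:rank}. Assemble the $2\times n$ matrix of generalized polynomials $(\sx{E_i}{x_j})$ with $i\in\{1,2\}$, so that evaluating at a length type $L$ recovers the matrix $(\qx{E_i}{x_j}{L})$. If $h$ is a solution of the pair of rank $n-1$ and length type $L$, then $\sol{n-1}{L}(E_1,E_2)\ne\varnothing$, so by Theorem \ref{thm:rank} the rank of $(\qx{E_i}{x_j}{L})$ is at most $n-(n-1)=1$; that is, every $2\times 2$ minor vanishes at such $L$. The first task is to exhibit one minor that is not identically zero. Suppose instead that every minor $\sx{E_1}{x_k}\sx{E_2}{x_l}-\sx{E_1}{x_l}\sx{E_2}{x_k}$ is zero in $\Z[X;\lhp]$; since distinct exponents give distinct values at generic $L$, a generalized polynomial vanishes at all $L\in\No^n$ only if it is zero, so then $(\qx{E_i}{x_j}{L})$ would have rank at most $1$ for \emph{every} $L$. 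Now a morphism of rank $n-1$ can send at most one unknown to $\eps$ (two would drop the rank to at most $n-2$), so every length type of a rank-$(n-1)$ solution of either equation has at most one zero coordinate; the second part of Theorem \ref{thm:rank} (with a trivial check when the rank is $0$) would then give $\sol{}{L}(E_1)=\sol{}{L}(E_2)$ at all such $L$, forcing $\sol{n-1}{}(E_1)=\sol{n-1}{}(E_2)$ and contradicting the hypothesis. Hence some minor $\Delta=\sx{E_1}{x_k}\sx{E_2}{x_l}-\sx{E_1}{x_l}\sx{E_2}{x_k}$ is a nonzero element of $\Z[X;\lhp]$, and every length type of a rank-$(n-1)$ solution lies in $\{L\in\No^n : \Delta(L)=0\}$.

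Next I would analyze this zero set combinatorially. Writing $\Delta=\sum_\mu c_\mu X^{p_\mu}$ with distinct $p_\mu\in\lhp$ and $c_\mu\in\Z\setminus\{0\}$, evaluation gives $\Delta(L)=\sum_\mu c_\mu X^{p_\mu(L)}$, which vanishes exactly when the exponents can be grouped by the common value $p_\mu(L)$ so that each group's coefficients sum to $0$. For a partition $\pi$ into such balanced blocks, the $L$ realizing it are cut out by the equalities $p_\mu(L)=p_\nu(L)$ within each block, whose common solution set is a subspace $W_\pi$; since further collisions only merge balanced blocks into balanced blocks, $\Delta$ in fact vanishes identically on all of $W_\pi$. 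Thus $\{\Delta=0\}=\bigcup_\pi W_\pi$ is a finite union of subspaces lying inside the rank-at-most-one locus, and because a balanced block has at least two (necessarily distinct) exponents, each $W_\pi$ has dimension at most $n-1$. To see that the $(n-1)$-dimensional ones already cover the length types of rank-$(n-1)$ solutions, fix such a length type $L$ and apply Lemma \ref{lem:rdim} (to an equation having the same rank-$(n-1)$ solutions as the pair, as in the proof of Theorem \ref{thm:rank}): it yields an $(n-1)$-dimensional $V\ni L$ whose rank-$(n-1)$ length types are not confined to any finite union of $(n-2)$-dimensional subspaces. These length types lie in $\bigcup_\pi (W_\pi\cap V)$, so some $W_\pi\cap V$ must be $(n-1)$-dimensional, i.e.\ $V\subseteq W_\pi$; comparing dimensions gives $W_\pi=V$, an $(n-1)$-dimensional subspace of the cover containing $L$.

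For the numerical bound I would count the $(n-1)$-dimensional $W_\pi$. Each is a hyperplane $\{\ell=0\}$ with $\ell$ proportional to a difference $p_\mu-p_\nu$ of two exponents of $\Delta$, so it suffices to bound the number of distinct linear forms of this shape. Every exponent of $\Delta$ is a sum of a prefix length-form attached to an occurrence of $x_k$ or $x_l$ in $E_1$ and one attached to such an occurrence in $E_2$; tracking which differences can actually equate two exponents and using that the $E_1$-part is indexed by the $|E_1|_{x_k}+|E_1|_{x_l}$ occurrences of $x_k,x_l$ in $E_1$ should collapse the count to at most $(|E_1|_{x_k}+|E_1|_{x_l})^2$. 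This bookkeeping — isolating exactly which prefix-sum coincidences are forced and showing the number of distinct hyperplanes depends only on $E_1$ — is the step I expect to be the most delicate, and the place where the precise quadratic constant is won or lost.

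Finally, for a minimal cover $V_1,\dots,V_m$ and an arbitrary $L\in V_i$, I would show the two equations are genuinely equivalent along each $V_i$. If $L$ has at least two zero coordinates then no rank-$(n-1)$ morphism has length type $L$, so $\sol{n-1}{L}(E_1)=\varnothing=\sol{n-1}{L}(E_2)$ trivially; assume at most one coordinate vanishes. By minimality $V_i$ carries length types of rank-$(n-1)$ solutions lying in no other $V_j$, and by Lemma \ref{lem:rdim} these are Zariski-dense in $V_i$; at each of them the matrix has rank exactly $1$ (at most $1$ by Theorem \ref{thm:rank}, at least $1$ since the equations are nontrivial), so the proportionality $\qx{E_2}{x_j}{L}=\lambda\,\qx{E_1}{x_j}{L}$ holds there for a single rational function $\lambda$, hence as an identity of rational functions on all of $V_i$. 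Evaluating at our $L$ and invoking the second part of Theorem \ref{thm:rank} gives $\sol{}{L}(E_1)=\sol{}{L}(E_2)$, whence $\sol{n-1}{L}(E_1)=\sol{n-1}{L}(E_2)$. The main obstacle throughout is this interplay between the cover and the rank condition: one must guarantee simultaneously that the covering subspaces sit inside the rank-one locus and that the rank-$(n-1)$ length types populate them densely, which is precisely what the partition structure of $\Delta$ and Lemma \ref{lem:rdim} are there to supply.
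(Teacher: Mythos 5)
Your skeleton tracks the paper's proof closely: produce a nonzero $2\times 2$ minor $\Delta = \sx{E_1}{x_k}\sx{E_2}{x_l}-\sx{E_1}{x_l}\sx{E_2}{x_k}$ via Theorem \ref{thm:rank}, observe that the zero set of a nonzero element of $\Z[X;\lhp]$ on $\No^n$ is a finite union of at most $(n-1)$-dimensional subspaces, use Lemma \ref{lem:rdim} to see that full-dimensional pieces suffice, and for the second claim show all minors vanish identically on each member of a minimal cover so that Theorem \ref{thm:rank} applies at every $L$ in it. Those parts are essentially correct (your ``single rational function $\lambda$'' step is loose, but it is cleanly repaired by your own partition argument: each minor vanishes on a set of length types not covered by finitely many $(n-2)$-dimensional spaces, and its zero set is a finite union of subspaces, so it vanishes on all of $V_i$). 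The genuine gap is the quantitative bound: the cover by at most $(|E_1|_{x_k}+|E_1|_{x_l})^2$ subspaces is precisely the step you defer as ``bookkeeping,'' and it is the entire content of the theorem beyond finiteness --- it is what makes Theorem \ref{thm:chain} and Corollary \ref{cor:chain} possible. Moreover, the route you sketch (counting the distinct difference-hyperplanes $\{p_\mu = p_\nu\}$ that occur as $(n-1)$-dimensional sets $W_\pi$) has no evident mechanism forcing the count to depend on $E_1$ alone: $\Delta$ has on the order of $|E_1|_{x_k}|E_2|_{x_l}+|E_1|_{x_l}|E_2|_{x_k}$ exponents, so the number of candidate hyperplanes is a priori quartic and involves $E_2$.

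The idea you are missing is the paper's ``minimal polynomials'' trick, which exploits the order $\preceq$. Writing $s_{1k}=\sum_{i\leq A}X^{a_i}-\sum_{i\leq A'}X^{a'_i}$ and similarly for $s_{2l}, s_{1l}, s_{2k}$, the exponents attached to one unknown on one side of one equation form a chain $a_1\preceq a_2\preceq\cdots$, because they are prefix sums of the same word, and $p\preceq q$ implies $p(L)\leq q(L)$ for all $L\in\No^n$. The positive exponents of $\Delta$ are among $a_i+b_j$, $a'_i+b'_j$, $c_i+d'_j$, $c'_i+d_j$; for each fixed occurrence index $i$ coming from $E_1$, taking the smallest $j$ such that $a_i+b_j$ survives cancellation gives a candidate $a_i+b_{j_i}$ with $a_i+b_{j_i}\preceq a_i+b_j$ for all relevant $j$. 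Hence at \emph{every} $L$, the minimum of the positive exponent values is attained by one of at most $A+A'+C+C'=|E_1|_{x_k}+|E_1|_{x_l}$ candidates, indexed by occurrences in $E_1$ only --- this is exactly where the dependence on $E_2$ disappears. The same holds for the negative exponents. At a length type $L$ of a rank-$(n-1)$ solution the multiset of positive values equals the multiset of negative values, so in particular the two minima agree, and $L$ lies on one of at most $(|E_1|_{x_k}+|E_1|_{x_l})^2$ hyperplanes $\{p=q\}$ with $p,q$ minimal candidates, each exactly $(n-1)$-dimensional since $p\neq q$ as linear forms. This min-matching argument replaces your $W_\pi$ count; without it your proof yields only a finite cover with no bound of the stated shape.
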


\begin{proof}
Let
\begin{math}
    s_{ij} = \sx{E_i}{x_j}
\end{math}
for $i = 1, 2$ and $j = 1, \dots, n$.
If all $2 \times 2$ minors of the $2 \times n$ matrix $(s_{ij})$ are zero,
then for all length types $L$ of solutions of rank $n-1$
the rank of the matrix $(q_{ij})$ in Theorem \ref{thm:rank} is 1 and
$E_1$ and $E_2$ are equivalent, which is a contradiction.
Thus there are $k,l$ such that
\begin{equation*}
    t_{kl} = s_{1k} s_{2l} - s_{1l} s_{2k} \ne 0 .
\end{equation*}
The generalized polynomial $t_{kl}$ can be written as
\begin{equation*}
   t_{kl} = \sum_{i=1}^M X^{p_i} - \sum_{i=1}^N X^{q_i},
\end{equation*}
where $p_i, q_i \in \lhp$ and $p_i \ne q_j$ for all $i,j$.
If $L$ is a length type of a solution of rank $n-1$,
then $M=N$ and $L$ must be a solution of the system of equations
\begin{equation} \label{eq:ssyst}
    p_i = q_{\sigma(i)} \qquad (i=1,\dots,M)
\end{equation}
for some permutation $\sigma$.
For every $\sigma$ the equations determine
an at most $(n-1)$-dimensional space.

Let the equations be $E_1: u_1 = v_1$ and $E_2: u_2 = v_2$.
Let
\begin{align*}
    &|u_1|_{x_k} = A ,& &|v_1|_{x_k} = A' ,& &|u_2|_{x_k} = B ,& &|v_2|_{x_k} = B', \\
    &|u_1|_{x_l} = C ,& &|v_1|_{x_l} = C' ,& &|u_2|_{x_l} = D ,& &|v_2|_{x_l} = D'.
\end{align*}
Then $s_{1k}, s_{2l}, s_{1l}, s_{2k}$ can be written as
\begin{equation*}
\begin{split}
    s_{1k} = \sum_{i=1}^{A} X^{a_i} - \sum_{i=1}^{A'} X^{a'_i},
    \quad
    s_{2l} = \sum_{i=1}^{B} X^{b_i} - \sum_{i=1}^{B'} X^{b'_i},
    \\
    s_{1l} = \sum_{i=1}^{C} X^{c_i} - \sum_{i=1}^{C'} X^{c'_i},
    \quad
    s_{2k} = \sum_{i=1}^{D} X^{d_i} - \sum_{i=1}^{D'} X^{d'_i},
\end{split}
\end{equation*}
where $a_i \preceq a_{i+1}$, $a'_i \preceq a'_{i+1}$, and so on.
The polynomials $p_i$ form a subset of the polynomials
$a_i + b_j$, $a'_i + b'_j$, $c_i + d'_j$ and $c'_i + d_j$
(the reason that they form just a subset is
that we assumed $p_i \ne q_j$ for all $i,j$).
For any $i$, let $j_i$ be the smallest index $j$
such that $a_i + b_j = p_m$ for some $m$.
Then for every $i,j,m$ such that $a_i + b_j = p_m$
we have $a_i + b_{j_i} \preceq p_m$.
We can do a similar thing for the polynomials
$a'_i, b'_i$ and $c_i, d'_i$ and $c'_i, d_i$.
In this way we obtain at most
\begin{math}
    A + A' + C + C'
\end{math}
polynomials $p_i$ such that for any $L$
the value of one of these polynomials is minimal among the values $p_i(L)$.
Similarly we obtain at most
\begin{math}
    A + A' + C + C'
\end{math}
``minimal'' polynomials $q_i$.
If $L$ satisfies one of the systems \eqref{eq:ssyst},
then the smallest of the values $p_i(L)$ must be the same as
the smallest of the values $q_i(L)$.
Thus $L$ must satisfy some equation $p_i = q_j$,
where $p_i$ and $q_j$ are some of the ``minimal'' polynomials.
There are at most
\begin{equation*}
    (A + A' + C + C')^2 = (|E_1|_{x_k} + |E_1|_{x_l})^2
\end{equation*}
possible pairs of such polynomials,
and each of them determines an $(n-1)$-dimensional space.

Consider the second claim.
Because the cover is minimal,
there is a solution of rank $n-1$ whose length type is in $V_i$,
but not in any other $V_j$.
By Lemma \ref{lem:rdim},
the set of length types of solutions of rank $n-1$ in this space
cannot be covered by a finite union of $(n-2)$-dimensional spaces.
Thus one of the systems \eqref{eq:ssyst} must determine the space $V_i$.
The same holds for systems coming from all other
nonzero $2 \times 2$ minors of the matrix $(s_{ij})$,
so $E_1$ and $E_2$ have the same solutions of rank $n-1$ and length type $L$
for all $L \in V_i$ by Theorem \ref{thm:rank}.
\end{proof}

The following example illustrates the proof of Theorem \ref{thm:cover}.
It gives a pair of equations on three unknowns
where the required number of subspaces is two.
We do not know any example where more spaces would be necessary.

\begin{example}
Consider the equations
\begin{equation*}
    E_1: x_1 x_2 x_3 = x_3 x_1 x_2
    \qquad \text{and} \qquad
    E_2: x_1 x_2 x_1 x_3 x_2 x_3 = x_3 x_1 x_3 x_2 x_1 x_2
\end{equation*}
and the generalized polynomial
\begin{equation*}
\begin{split}
    s =& \sx{E_1}{x_1} \sx{E_2}{x_3}
        - \sx{E_1}{x_3} \sx{E_2}{x_1} \\
    =& X^{2 X_1 + X_2} + X^{2 X_1 + 2 X_2 + X_3} + X^{X_1 + 2 X_3}
        + X^{X_1 + X_2 + X_3} \\
    &- X^{2 X_1 + X_2 + X_3} - X^{X_1 + X_3}
        - X^{2 X_1 + 2 X_2} - X^{X_1 + X_2 + 2 X_3}.
\end{split}
\end{equation*}
If $L$ is a length type of a nontrivial solution of the pair $E_1, E_2$,
then $s(L) = 0$.
If $s(L) = 0$, then $L$ must satisfy an
equation $p = q$, where
\begin{equation*}
    p \in \{2 X_1 + X_2, X_1 + 2 X_3, X_1 + X_2 + X_3\}
    \quad \text{and} \quad
    q \in \{X_1 + X_3, 2 X_1 + 2 X_2\}.
\end{equation*}
The possible relations are
\begin{equation*}
    X_3 = 0, \qquad
    X_1 + X_2 = X_3, \qquad
    X_2 = 0, \qquad
    X_1 + 2 X_2 = 2 X_3.
\end{equation*}
If $L$ satisfies one of the first three, then $s(L) = 0$.
If $L$ satisfies the last one, then $s(L) \ne 0$, except if $L = 0$.
So if $h$ is a nonperiodic solution, then
\begin{equation*}
    |h(x_3)| = 0 \qquad \text{or} \qquad
    |h(x_1 x_2)| = |h(x_3)| \qquad \text{or} \qquad
    |h(x_2)| = 0.
\end{equation*}
There are no nonperiodic solutions with $h(x_2) = \eps$,
but every $h$ with $h(x_3) = \eps$ or $h(x_1 x_2) = h(x_3)$ is a solution.
\end{example}

\section{Independent Systems} \label{sect:indsyst}

A system of word equations $E_1, \dots, E_m$ is \emph{independent}
if it is not equivalent to any of its proper subsystems.

A sequence of nontrivial equations $E_1, \dots, E_m$ is a \emph{chain} if
\begin{equation*}
    \sol{}{}(E_1) \supsetneq \sol{}{}(E_1, E_2) \supsetneq
    \dots \supsetneq \sol{}{}(E_1, \dots, E_m) .
\end{equation*}

The question of the maximal size of an independent system is open.
The only things that are known are that
independent systems cannot be infinite \cite{AlLa85ehrenfeucht, Gu86} and
there are systems of size $\Theta(n^4)$,
where $n$ is the number of unknowns \cite{KaPl94}.
The question of the maximal size of a chain is similarly open.
For a survey on these topics, see \cite{KaSa11adian}.

An equation $u = v$ is \emph{balanced}
if $|u|_x = |v|_x$ for every unknown $x$.
Harju and Nowotka proved that
if an independent pair of equations on three unknowns
has a nonperiodic solution,
then the equations must be balanced \cite{HaNo03}.
The proof is long and it is based on a theorem of Spehner \cite{Sp86}
(or alternatively a theorem of Budkina and Markov \cite{BuMa73}),
which also has only a very complicated proof.
However, with the help of Theorem \ref{thm:cover}
we get a significantly simpler proof and a generalization
for this result.

\begin{theorem} \label{thm:balance}
Let $E_1, E_2$ be a pair of equations on $n$ unknowns
having a solution of rank $n-1$.
If $E_1$ is not balanced,
then $\sol{n-1}{}(E_1) \subseteq \sol{n-1}{}(E_2)$.
\end{theorem}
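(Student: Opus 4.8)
The plan is to use Theorem~\ref{thm:cover} as the main engine, combined with a degree/length argument exploiting the fact that an unbalanced equation produces a nonhomogeneous length constraint. First I would observe that the whole content of Theorem~\ref{thm:balance} is to promote the local, length-type-by-length-type equivalence supplied by Theorem~\ref{thm:cover} into a global containment on all rank-$(n-1)$ solutions. So the natural first step is a dichotomy: either $\sol{n-1}{}(E_1) = \sol{n-1}{}(E_2)$ already, in which case the conclusion is immediate, or $\sol{n-1}{}(E_1) \ne \sol{n-1}{}(E_2)$ and Theorem~\ref{thm:cover} applies directly to the pair $E_1, E_2$.

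In the second, nontrivial case, Theorem~\ref{thm:cover} tells me that the length types of rank-$(n-1)$ solutions of the pair are covered by finitely many $(n-1)$-dimensional subspaces $V_1, \dots, V_m$, and that on each $V_i$ in a minimal cover we have $\sol{n-1}{L}(E_1) = \sol{n-1}{L}(E_2)$ for every $L \in V_i$. The key idea is now to bring in the hypothesis that $E_1$ is unbalanced. If $E_1: u_1 = v_1$ with $|u_1|_{x_j} \ne |v_1|_{x_j}$ for some $j$, then for any solution $h$ of $E_1$ the length equation $\len{L}(u_1) = \len{L}(v_1)$ is a \emph{nontrivial} linear relation on the components of $L$, because the coefficient of $|h(x_j)|$ is $|u_1|_{x_j} - |v_1|_{x_j} \ne 0$. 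Thus every length type of a solution of $E_1$ lies on a fixed $(n-1)$-dimensional subspace $W$ determined by this balance equation. I would then argue that the length types of rank-$(n-1)$ solutions of $E_1$ alone, restricted to $W$, cannot be squeezed into a union of lower-dimensional spaces: by Lemma~\ref{lem:rdim}, any single rank-$(n-1)$ solution forces an $(n-1)$-dimensional family of length types inside $W$ that is not covered by finitely many $(n-2)$-dimensional subspaces.

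The crux is to reconcile these two families of subspaces. The solutions of $E_1$ of rank $n-1$ have length types filling (in the Lemma~\ref{lem:rdim} sense) the single hyperplane $W$, whereas the pair's solutions are covered by $V_1, \dots, V_m$; so I must show that $W$ itself appears among (the minimal cover refining) the $V_i$, i.e.\ that $W = V_i$ for some $i$. This follows because a rank-$(n-1)$ solution of $E_1$ with a sufficiently generic length type in $W$ is forced by the cover to lie in some $V_i$, and the non-coverability from Lemma~\ref{lem:rdim} prevents such solutions from being confined to the intersection $W \cap V_i$ unless that intersection is all of $W$, forcing $W \subseteq V_i$ and hence $W = V_i$ by dimension. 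The main obstacle I anticipate is precisely this dimension-counting comparison: I must be careful that the rank-$(n-1)$ solutions of $E_1$ are actual solutions of $E_1$ (not merely of the pair), yet their length types must be matched against a cover built for the pair, and I will need to invoke the minimal-cover clause of Theorem~\ref{thm:cover} on the subspace $W$ rather than on an arbitrary $V_i$.

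Once $W = V_i$ is established, the conclusion drops out: every rank-$(n-1)$ solution $h$ of $E_1$ has length type $L \in W = V_i$, and on $V_i$ Theorem~\ref{thm:cover} gives $\sol{n-1}{L}(E_1) = \sol{n-1}{L}(E_2)$, so in particular $h \in \sol{n-1}{L}(E_2) \subseteq \sol{n-1}{}(E_2)$. Taking the union over all such $L$ yields $\sol{n-1}{}(E_1) \subseteq \sol{n-1}{}(E_2)$, as desired. The unbalancedness of $E_1$ is used exactly once but essentially, to guarantee that the length types of $E_1$'s solutions live on a genuine hyperplane $W$ of the right dimension to match a member of the cover; for a balanced equation this argument would collapse, which is consistent with the Harju--Nowotka phenomenon that balanced equations behave differently.
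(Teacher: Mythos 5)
Your overall strategy --- the initial dichotomy, the hyperplane $W$ coming from unbalancedness, and the final appeal to the minimal-cover clause of Theorem~\ref{thm:cover} --- matches the paper's proof, but the middle step, where you identify $W$ with one of the subspaces $V_i$, contains a genuine gap. You argue that ``a rank-$(n-1)$ solution of $E_1$ with a sufficiently generic length type in $W$ is forced by the cover to lie in some $V_i$.'' This is unjustified: the cover $V_1, \dots, V_m$ produced by Theorem~\ref{thm:cover} constrains only the length types of rank-$(n-1)$ solutions \emph{of the pair} $E_1, E_2$, not of $E_1$ alone. A rank-$(n-1)$ solution of $E_1$ is not known at this point to be a solution of $E_2$ --- that is precisely what the theorem asserts --- so its length type need not lie in $\bigcup_i V_i$, and the non-coverability argument via Lemma~\ref{lem:rdim} cannot get started. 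In effect this step silently assumes a weak form of the conclusion; moreover, it is not even clear that $W$ must occur among the $V_i$ of the cover constructed in the first clause of Theorem~\ref{thm:cover}.

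The repair is simpler than what you attempt, and it is what the paper does: there is no need to compare $W$ with those $V_i$ at all. Since every solution of the pair is in particular a solution of $E_1$, all length types of rank-$(n-1)$ solutions of the pair lie in $W$; hence the single space $W$ is by itself a cover of those length types by $(n-1)$-dimensional subspaces, and it is a \emph{minimal} such cover because the pair has at least one solution of rank $n-1$, so the covered set is nonempty and the empty subfamily does not cover it. The second clause of Theorem~\ref{thm:cover} applies to any minimal cover, in particular to the one-element cover $\{W\}$, and gives $\sol{n-1}{L}(E_1) = \sol{n-1}{L}(E_2)$ for \emph{every} $L \in W$. Your concluding step then works verbatim: each rank-$(n-1)$ solution of $E_1$ has length type $L \in W$, hence lies in $\sol{n-1}{L}(E_2) \subseteq \sol{n-1}{}(E_2)$.
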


\begin{proof}
If $E_1$ is the equation $u = v$ and $h$ is a solution of $E_1$, then
\begin{equation*}
    \sum_{i=1}^n |u|_{x_i} |h(x_i)|
    = \sum_{i=1}^n |v|_{x_i} |h(x_i)|
\end{equation*}
and $|u|_{x_i} \ne |v|_{x_i}$ for at least one $i$.
Thus the set of length types of solutions of $E_1$
is covered by a single $(n-1)$-dimensional space $V$.
Because the pair $E_1, E_2$ has a solution of rank $n-1$,
$V$ is a minimal cover
for the length types of the solutions of the pair of rank $n-1$.
By Theorem \ref{thm:cover},
$E_1$ and $E_2$ have the same solutions of length type $L$ and rank $n-1$
for all $L \in V$.
\end{proof}

Another way to think of this result is that if $E_1$ is not balanced
but has a solution of rank $n-1$ that is not a solution of $E_2$,
then the pair $E_1, E_2$ causes a larger than minimal defect effect.

If $h: \Xi^* \to \Sigma^*$ is a morphism,
then the \emph{entire system} generated by $h$
is the set of all equations satisfied by $h$.
It is denoted by $K_h$.
As a consequence of Theorem \ref{thm:balance},
we get the following result about entire systems.
The case of three unknowns was proved in \cite{HaNo03}.

\begin{corollary}
If $g, h: \Xi^* \to \Sigma^*$ are morphisms of rank $n-1$ and $K_g \ne K_h$,
then $K_g \cap K_h$ contains only balanced equations.
\end{corollary}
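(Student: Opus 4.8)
The plan is to argue by contradiction and reduce everything to Theorem \ref{thm:balance}. Suppose that $K_g \cap K_h$ contains an unbalanced equation $E$. Since $E \in K_g \cap K_h$, both $g$ and $h$ satisfy $E$; as both morphisms have rank $n-1$ by hypothesis, we have $g, h \in \sol{n-1}{}(E)$. In particular $E$ is an equation on $n$ unknowns with a solution of rank $n-1$, which is exactly the setting Theorem \ref{thm:balance} requires.

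Because $K_g \ne K_h$, at least one of the two set differences is nonempty. By the symmetry of the roles of $g$ and $h$, I would assume without loss of generality that there is an equation $F \in K_g \setminus K_h$, so that $g$ satisfies $F$ but $h$ does not. Now I would apply Theorem \ref{thm:balance} to the pair $E, F$, taking $E_1 = E$ and $E_2 = F$. The hypothesis that this pair has a solution of rank $n-1$ is supplied by $g$, which satisfies both $E$ and $F$ and has rank $n-1$. Since $E$ is unbalanced, the theorem yields $\sol{n-1}{}(E) \subseteq \sol{n-1}{}(F)$.

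From here the contradiction is immediate: we already observed $h \in \sol{n-1}{}(E)$, so the inclusion gives $h \in \sol{n-1}{}(F)$, meaning $h$ satisfies $F$ and hence $F \in K_h$, contradicting $F \notin K_h$. Therefore no unbalanced equation can lie in $K_g \cap K_h$, which is the claim.

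I do not expect a deep obstacle, since this is essentially a one-step application of Theorem \ref{thm:balance}. The only points needing care are bookkeeping: first, verifying that the pair $E, F$ genuinely possesses a rank-$(n-1)$ solution (furnished by $g$) before invoking the theorem; and second, handling the $g \leftrightarrow h$ symmetry correctly so that the \emph{unbalanced} common equation is cast as $E_1$ while the \emph{distinguishing} equation $F$ is cast as $E_2$. (Note also that trivial common equations pose no problem, as they are automatically balanced and so never threaten the conclusion.)
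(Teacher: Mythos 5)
Your proof is correct and follows essentially the same route as the paper: choose a distinguishing equation in $K_g \smallsetminus K_h$ (after a harmless symmetry reduction), use $g$ as the rank-$(n-1)$ solution of the pair, and apply Theorem \ref{thm:balance} to force $h$ to satisfy the distinguishing equation, a contradiction. The only difference is cosmetic --- you phrase it as a contradiction about a single unbalanced equation $E$, while the paper argues directly that every $E_1 \in K_g \cap K_h$ must be balanced.
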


\begin{proof}
It can be assumed that there is an equation $E_2 \in K_g \smallsetminus K_h$.
For any equation $E_1 \in K_g \cap K_h$,
$g$ is a solution of the pair $E_1, E_2$ and
$h$ is a solution of $E_1$ but not of $E_2$.
By Theorem \ref{thm:balance}, $E_1$ must be balanced.
\end{proof}

As the main application of the tools developed in this article,
the following variation of the question
about maximal sizes of chains is studied:
How long can a sequence of nontrivial equations $E_1, \dots, E_m$ be if
\begin{equation*}
    \sol{n-1}{}(E_1) \supsetneq \sol{n-1}{}(E_1, E_2) \supsetneq
    \dots \supsetneq \sol{n-1}{}(E_1, \dots, E_m) ?
\end{equation*}
We prove an upper bound
depending quadratically on the length of the first equation.
For three unknowns we get a similar bound
for the size of independent systems and chains.
Previously no bounds like those in
Theorem \ref{thm:chain} and Corollary \ref{cor:chain}
have been known.

\begin{theorem} \label{thm:chain}
Let $E_1, \dots, E_m$ be nontrivial equations on $n$ unknowns and let
\begin{equation*}
    \sol{n-1}{}(E_1) \supsetneq \sol{n-1}{}(E_1, E_2) \supsetneq
    \dots \supsetneq \sol{n-1}{}(E_1, \dots, E_m) \ne \varnothing .
\end{equation*}
If the set of length types of solutions of the pair $E_1, E_2$ of rank $n-1$
is covered by a union of $N$ $(n-1)$-dimensional subspaces,
then $m \leq N + 1$.
There are two unknowns $x, y$ such that
\begin{math}
    m \leq (|E_1|_x + |E_1|_y)^2 + 1 .
\end{math}
\end{theorem}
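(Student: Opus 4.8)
The plan is to reduce the length of the chain to a counting argument over the $(n-1)$-dimensional subspaces produced by Theorem \ref{thm:cover}. First I would apply that theorem to the pair $E_1, E_2$: since $\sol{n-1}{}(E_1) \supsetneq \sol{n-1}{}(E_1, E_2)$ forces $\sol{n-1}{}(E_1) \ne \sol{n-1}{}(E_2)$, the theorem applies and yields a minimal cover $V_1, \dots, V_{N'}$ with $N' \le N$ of the length types of the rank $n-1$ solutions of the pair, such that $\sol{n-1}{L}(E_1) = \sol{n-1}{L}(E_2)$ for every $L$ in each $V_i$. Writing $S_j = \sol{n-1}{}(E_1, \dots, E_j)$, the idea is to track, for each $j$, which subspaces $V_i$ remain active in $S_j$, and to show that every strict drop $S_j \supsetneq S_{j+1}$ extinguishes at least one subspace.

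The key device is a preserve/kill dichotomy for each equation against each $V_i$. Say $V_i$ is \emph{preserved} by $E_j$ if $\sol{n-1}{L}(E_1) = \sol{n-1}{L}(E_j)$ for all $L \in V_i$, and \emph{killed} by $E_j$ if $\sol{n-1}{L}(E_1) \cap \sol{n-1}{L}(E_j) = \varnothing$ for all $L \in V_i$ outside a finite union of $(n-2)$-dimensional subspaces. To see these are the only possibilities when $\sol{n-1}{}(E_1) \ne \sol{n-1}{}(E_j)$, apply Theorem \ref{thm:cover} to the pair $E_1, E_j$ to get a cover $\mathcal C_j$; the set of $L \in V_i$ carrying a common rank $n-1$ solution of $E_1$ and $E_j$ lies in $\bigcup \mathcal C_j$, and if this set is not contained in finitely many $(n-2)$-subspaces then, because an $(n-1)$-subspace over $\Q$ cannot be covered by finitely many proper subspaces, $V_i$ must coincide with a member of $\mathcal C_j$, whence $E_1$ and $E_j$ agree on all of $V_i$. (When $\sol{n-1}{}(E_1) = \sol{n-1}{}(E_j)$ the equation preserves every $V_i$ trivially.)

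With the dichotomy in hand, let $G_j$ be the set of indices $i$ such that each of $E_2, \dots, E_j$ preserves $V_i$. Then $G_2 = \{1, \dots, N'\}$ by the choice of cover, and $G_2 \supseteq G_3 \supseteq \dots \supseteq G_m$. For $i \in G_j$ we get $\sol{n-1}{L}(E_1, \dots, E_j) = \sol{n-1}{L}(E_1)$ for all $L \in V_i$, while for $i \notin G_j$ some $E_t$ kills $V_i$, so $\sol{n-1}{L}(E_1, \dots, E_j) = \varnothing$ for generic $L \in V_i$. The crucial step is that a strict drop $S_j \supsetneq S_{j+1}$ with $2 \le j \le m-1$ forces $G_{j+1} \subsetneq G_j$: take a rank $n-1$ solution $h \in S_j \smallsetminus S_{j+1}$ of length type $L$; by Lemma \ref{lem:rdim} there is an $(n-1)$-subspace $V \ni L$ in which the length types of rank $n-1$ solutions of $E_1, \dots, E_j$ are not covered by finitely many $(n-2)$-subspaces; this dense set lies in $\bigcup_i V_i$, so $V = V_i$ for some $i$, and density forces $i \in G_j$. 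Since $h$ solves $E_1$ but not $E_{j+1}$, the point $L$ witnesses $\sol{n-1}{L}(E_1) \ne \sol{n-1}{L}(E_{j+1})$, so $V_i$ is not preserved by $E_{j+1}$; hence $i \in G_j \smallsetminus G_{j+1}$.

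Finally I would close the count. Using $S_m \ne \varnothing$ together with Lemma \ref{lem:rdim} in the same way shows $G_m \ne \varnothing$. The strictly decreasing chain $N \ge N' = |G_2| > |G_3| > \dots > |G_m| \ge 1$ then gives $m - 2 \le N' - 1 \le N - 1$, that is $m \le N + 1$. The second bound is immediate, since Theorem \ref{thm:cover} lets us take $N = (|E_1|_x + |E_1|_y)^2$ for suitable unknowns $x, y$, whence $m \le (|E_1|_x + |E_1|_y)^2 + 1$. I expect the main obstacle to be the dichotomy and its genericity bookkeeping: arguing that the subspace supplied by Lemma \ref{lem:rdim} must be exactly one of the $V_i$, and that ``not killed'' on an $(n-1)$-subspace really does force exact agreement of $E_1$ and $E_j$ on all of $V_i$ rather than mere partial overlap.
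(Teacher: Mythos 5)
Your proof is correct, and it rests on the same counting mechanism as the paper's — every strict inclusion in the chain must discard at least one subspace from a fixed finite cover of the length types of the rank $n-1$ solutions of the pair $E_1, E_2$ — but the bookkeeping is genuinely different. The paper first normalizes the chain: it assumes each $E_i$ is (replaced by) a single equation equivalent on rank $n-1$ solutions to the system $E_1, \dots, E_i$, and then only ever compares \emph{consecutive} equations: if the minimal subcover needed for $E_{i+1}$ were the whole cover inherited from $E_i$, the minimality clause of Theorem \ref{thm:cover} would force $\sol{n-1}{}(E_i) = \sol{n-1}{}(E_{i+1})$, contradicting strictness, so the cover loses a member at every step; this makes the proof about five lines long. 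You avoid that normalization by anchoring every comparison at $E_1$, which is what forces you to introduce the preserve/kill dichotomy and to invoke Lemma \ref{lem:rdim} directly to localize a witness of each drop $S_j \supsetneq S_{j+1}$ inside a single subspace $V_i$; in effect you re-run, inside the chain argument, the localization that the paper has already packaged into the second claim of Theorem \ref{thm:cover}. What your route buys is that later equations never need to be replaced by single equations representing initial-segment systems; what it costs is length, and note that it does not fully escape that reduction, since applying Lemma \ref{lem:rdim} to the system $E_1, \dots, E_j$ requires either a systems version of Lemma \ref{lem:elemtrans} or an equation with the same nonperiodic solutions as the system — exactly the device the paper uses in the proof of Theorem \ref{thm:rank}, so your usage is on the same footing as the paper's own. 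Two small points to tighten: in the dichotomy you must take $\mathcal{C}_j$ to be a \emph{minimal} (irredundant) cover, since membership $V_i \in \mathcal{C}_j$ yields agreement on $V_i$ only through the minimality clause of Theorem \ref{thm:cover}; and the fact that a $\Q$-space is not a finite union of proper subspaces is not what is needed there — it suffices that two distinct $(n-1)$-dimensional subspaces meet in dimension at most $n-2$, which your non-coverability hypothesis contradicts directly.
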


\begin{proof}
It can be assumed that $E_i$ is equivalent to the system $E_1, \dots, E_i$
for all $i \in \{1, \dots, m\}$.
Let the set of length types of solutions of $E_2$ of rank $n-1$
be covered by the $(n-1)$-dimensional spaces $V_1, \dots, V_N$.
Some subset of these spaces forms a minimal cover
for the length types of solutions of $E_3$ of rank $n-1$.
If this minimal cover would be the whole set,
then $E_2$ and $E_3$ would have the same solutions of rank $n-1$
by the second part of Theorem \ref{thm:cover}.
Thus the set of length types of solutions of $E_3$ of rank $n-1$
is covered by some $N-1$ of these spaces.
We conclude inductively that
the set of length types of solutions of $E_i$ of rank $n - 1$
is covered by some $N - i + 2$ of these spaces
for all $i \in \{2, \dots, m\}$.
It must be $N - m + 2 \geq 1$, so $m \leq N + 1$.
The second claim follows by Theorem \ref{thm:cover}.
\end{proof}

In the case of three unknowns, Theorem \ref{thm:chain} gives an
upper bound depending on the length of the shortest equation for the
size of an independent system of equations, or an upper bound
depending on the length of the first equation for the size of a
chain of equations. A better bound in Theorem \ref{thm:cover} would
immediately give a better bound in the following corollary.

\begin{corollary} \label{cor:chain}
If $E_1, \dots, E_m$ is an independent system on three unknowns
having a nonperiodic solution, then
\begin{math}
    m \leq (|E_1|_x + |E_1|_y)^2 + 1
\end{math}
for some $x, y \in \Xi$.
If $E_1, \dots, E_m$ is a chain of equations on three unknowns,
then
\begin{math}
    m \leq (|E_1|_x + |E_1|_y)^2 + 5
\end{math}
for some $x, y \in \Xi$.
\end{corollary}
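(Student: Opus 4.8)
The plan is to specialize Theorem \ref{thm:chain} to $n=3$, where rank $n-1=2$, since the quadratic term $(|E_1|_x+|E_1|_y)^2$ is produced directly there (through the cover bound of Theorem \ref{thm:cover}). The entire difficulty is passing from statements about all solutions to statements about solutions of rank $2$. Two preliminary observations organize everything. First, a nonperiodic solution has rank $2$ or $3$, and composing a rank-$3$ solution with a suitable morphism $\Sigma^*\to\Sigma^*$ yields a solution of rank exactly $2$; hence a nonperiodic solution of a system guarantees $\sol{2}{}(E_1,\dots,E_m)\neq\varnothing$. Second, solutions split into three strata by rank ($1$, $2$, $3$), each stratum giving a nonincreasing sequence of solution sets along the chain.

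For the \emph{independent system} I would first reduce to a rank-$2$ statement. An irredundant system can always be reordered so that $\sol{}{}(E_1)\supsetneq\sol{}{}(E_1,E_2)\supsetneq\cdots$: by a greedy argument, if no remaining equation strictly cuts the current solution set then that set already equals $\sol{}{}(E_1,\dots,E_m)$ and any remaining equation is redundant, contradicting irredundancy. The goal is the analogue for rank-$2$ solutions, namely that for each $i$ there is a rank-$2$ solution satisfying all $E_j$ with $j\neq i$ but not $E_i$. Granting this ``rank-$2$ irredundancy'', the same greedy argument produces a strictly decreasing chain of rank-$2$ solution sets of length $m$ with nonempty bottom (the given nonperiodic solution reduced to rank $2$), and Theorem \ref{thm:chain} gives $m\leq(|E_1|_x+|E_1|_y)^2+1$ after ordering $E_1$ to be shortest. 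I would also note that Theorem \ref{thm:balance} forces every equation of such a system with $m\geq 2$ to be balanced, since an unbalanced $E_1$ sharing the rank-$2$ solution with any $E_i$ would satisfy $\sol{2}{}(E_1)\subseteq\sol{2}{}(E_i)$ and collapse the system at rank $2$.

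For the \emph{chain} I would use the rank stratification directly. Partition the $m-1$ strict steps according to which stratum strictly decreases, assigning each step to one such rank. The rank-$2$ steps number at most $(|E_1|_x+|E_1|_y)^2$: extracting the subsequence of equations at which the rank-$2$ solution set actually drops yields a genuine strict rank-$2$ chain starting at $E_1$ (the skipped equations being redundant at rank $2$ precisely where they are skipped), to which Theorem \ref{thm:chain} applies. The rank-$1$ (periodic) steps are controlled by linear algebra over $\Q^3$: a periodic solution $x_j\mapsto w^{a_j}$ satisfies $E_l$ iff $\sum_j(|u_l|_{x_j}-|v_l|_{x_j})a_j=0$, so the periodic solution sets are the nonnegative points of a descending chain of subspaces of $\Q^3$, which strictly descends at most three times. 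The rank-$3$ steps are controlled by the Graph lemma (Theorem \ref{thm:graph}): a rank-$3$ solution has all images nonempty, so every compatible equation has both sides beginning, and by reversal ending, with the same unknown, a restriction strong enough to permit only a bounded number of strict decreases of the rank-$3$ set. Summing gives $m-1\leq(|E_1|_x+|E_1|_y)^2+4$.

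The main obstacle, and the step I expect to be delicate, is the rank-$2$ irredundancy in the independent case: ordinary irredundancy supplies for each $i$ a witness $g_i$ solving all $E_j$ with $j\neq i$ but not $E_i$, yet $g_i$ may have rank $1$ or $3$, and neither reducing a rank-$3$ witness (composition may accidentally satisfy $E_i$) nor pushing a periodic witness toward rank $2$ is automatic. I would resolve this by combining the given rank-$2$ solution of the whole system with $g_i$, using the length-type and subspace machinery of Theorem \ref{thm:cover} to manufacture a rank-$2$ solution lying in $\sol{2}{}(\{E_j\mid j\neq i\})$ but not in $\sol{2}{}(E_i)$. A secondary obstacle is pinning the additive constant $5$ in the chain case: justifying the exact bounds of three and one on the rank-$1$ and rank-$3$ steps and handling boundary cases, such as an empty rank-$2$ bottom or a subspace meeting the nonnegative orthant only at the origin, is where the careful accounting lives.
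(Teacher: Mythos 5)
Your overall architecture --- deriving both bounds from Theorem \ref{thm:chain}, via a rank-$2$ chain in the independent case and via a stratification of the solution sets by rank in the chain case --- is the intended (implicit) derivation, but the two steps you yourself flag as unresolved are genuine gaps, and in both cases the correct fix is different from what you propose. In the independent case, the ``rank-$2$ irredundancy'' should not be manufactured with the machinery of Theorem \ref{thm:cover} (it is unclear how one could ``combine'' a rank-$2$ solution of the whole system with a possibly periodic witness $g_i$); it follows in one line from your own side remark, once that remark is actually proved. Note that the containment $\sol{2}{}(E_1)\subseteq\sol{2}{}(E_i)$ from Theorem \ref{thm:balance} does not by itself ``collapse'' anything: independence is a condition on \emph{all} solutions, including periodic ones. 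What you need is: (a) every periodic morphism satisfies every balanced equation (count occurrences), so if some $E_i$ is unbalanced and some $E_j$ is balanced, then $\sol{}{}(E_i)\subseteq\sol{}{}(E_j)$ in full (rank-$2$ part by Theorem \ref{thm:balance}, periodic part trivially), making $E_j$ redundant; (b) if $E_i: u_i=v_i$ and $E_j: u_j=v_j$ are both unbalanced, Theorem \ref{thm:balance} applied in both directions gives $\sol{2}{}(E_i)=\sol{2}{}(E_j)$, and then Lemma \ref{lem:rdim} (the length types of these common rank-$2$ solutions cannot be covered by finitely many $1$-dimensional spaces) forces the vectors $(|u_i|_{x_k}-|v_i|_{x_k})_k$ and $(|u_j|_{x_k}-|v_j|_{x_k})_k$ to be parallel, whence the two equations also have the same periodic solutions and are equivalent --- again contradicting independence. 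Hence for $m\ge 2$ every equation is balanced, so each witness $g_i$ fails a balanced equation and cannot be periodic, and as a nonperiodic solution of a nontrivial equation on three unknowns it has rank exactly $2$ by the defect effect. That is the rank-$2$ irredundancy, and Theorem \ref{thm:chain} then applies directly.

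In the chain case the rank-$3$ stratum is handled incorrectly. The restriction you extract from Theorem \ref{thm:graph} (both sides begin, and by reversal end, with the same unknown) does not bound the number of strict decreases of the rank-$3$ stratum: infinitely many nontrivial equations have this form, and nothing in the restriction limits how often they can cut the set. The fact you need is that the rank-$3$ stratum of $\sol{}{}(E_1,\dots,E_i)$ is \emph{empty} for every $i$: a rank-$3$ morphism has distinct nonempty images, so if it satisfied the nontrivial $E_1$ these images would satisfy a nontrivial relation, contradicting the defect effect (alternatively, repeatedly cancel the common first unknown of the two sides of $E_1$; what remains is nontrivial, and either one side is empty or the sides start with distinct unknowns, and Theorem \ref{thm:graph} then gives rank at most $2$). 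So there are $0$ rank-$3$ drops, not ``at most one''; the same fact also makes your unjustified claim about composing a rank-$3$ solution down to rank exactly $2$ moot. This is not pedantry but exactly where the constant comes from: when the rank-$2$ stratum becomes empty at the last rank-$2$ drop, the number of rank-$2$ drops is bounded only by $(|E_1|_x+|E_1|_y)^2+1$ (apply Theorem \ref{thm:chain} to the extracted subchain with its last equation removed, so that the bottom is nonempty), so with your budget of $1$ for rank $3$ you would only reach $m\le(|E_1|_x+|E_1|_y)^2+6$. The correct accounting is $m-1\le\bigl((|E_1|_x+|E_1|_y)^2+1\bigr)+3+0$, where $3$ bounds the periodic drops exactly as you argue, and this gives the claimed $+5$.
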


Corollary \ref{cor:chain} means that
as soon as we take one equation on three unknowns,
we get a fixed bound
for the size of independent systems containing that equation.

It is worth noting that the bounds
in Theorem \ref{thm:chain} and Corollary \ref{cor:chain}
do not depend on the number of unknowns, only on the length of one equation.

Getting a similar bound
for the sizes of independent systems or chains
in the case of more than three unknowns
remains an open problem.
Such a bound would have to depend on the number of unknowns.
Indeed, in Theorem \ref{thm:chain} it is not enough to assume
that the equations are independent and have a common solution of rank $n - 1$.
If the number of unknowns is not fixed,
then there are arbitrarily large such systems
where the length of every equation is 10 \cite{KaPl94}.

\bibliographystyle{plain}
\bibliography{../bibtex/ref}

\end{document}